\theoremstyle{plain}
\newtheorem{theorem}{Theorem}[section]
\newtheorem{definition}[theorem]{Definition}
\newtheorem{lemma}[theorem]{Lemma}
\newtheorem{proposition}[theorem]{Proposition}
\newtheorem{corollary}[theorem]{Corollary}
\newtheorem{remark}[theorem]{Remark}
\newtheorem{question}[theorem]{Question}
\newtheorem{remark-question}[section]{Remark-Question}
\newcommand\fra{{\mathfrak a}} 
\newcommand\frg{{\mathfrak g}}
\newcommand\frh{{\mathfrak h}}
\newcommand\frm{{\mathfrak m}}
\newcommand\Real{{\mathfrak R}{\frak e}\,} 
\newcommand\Imag{{\mathfrak I}{\frak m}\,}
\newcommand\nilm{\Gamma\backslash G}
\newcommand\db{{\bar{\partial}}}
\begin{document}
\title[]{
A family of complex nilmanifolds with infinitely many\\ real homotopy types
}

\keywords{Nilmanifold, nilpotent Lie algebra, complex structure, Hermitian metric,
homotopy theory, minimal model.}
\subjclass[2000]{Primary 55P62, 17B30; Secondary 53C55.}

\author{Adela Latorre}
\address[A. Latorre and R. Villacampa]{Centro Universitario de la Defensa\,-\,I.U.M.A., Academia General
Mili\-tar, Crta. de Huesca s/n. 50090 Zaragoza, Spain}
\email{adela@unizar.es}
\email{raquelvg@unizar.es}

\author{Luis Ugarte}
\address[L. Ugarte]{Departamento de Matem\'aticas\,-\,I.U.M.A.\\
Universidad de Zaragoza\\
Campus Plaza San Francisco\\
50009 Zaragoza, Spain}
\email{ugarte@unizar.es}

\author{Raquel Villacampa}


\maketitle

\begin{abstract}
We find a one-parameter family of non-isomorphic nilpotent Lie algebras $\frg_a$, with $a \in [0,\infty)$,
of real dimension eight with (strongly non-nilpotent) complex structures. By restricting~$a$ to take rational values, we arrive
at the existence of infinitely many real homotopy types of $8$-dimensional nilmanifolds admitting a complex structure.
Moreover, balanced Hermitian metrics and generalized Gauduchon metrics on such nilmanifolds are constructed.
\end{abstract}

\maketitle


\section{Introduction}\label{intro}

\noindent
Let $\frg$ be an even-dimensional real nilpotent Lie algebra.  A complex structure on $\frg$ is an endomorphism $J\colon\frg\longrightarrow\frg$
satisfying $J^2=-\textrm{Id}$ and the integrability condition given by the vanishing of the Nijenhuis tensor, i.e.
\begin{equation}\label{Nijenhuis}
N_J(X,Y):=[X,Y]+J[JX,Y]+J[X,JY]-[JX,JY]=0,
\end{equation}
for all $ X,Y\in\frg$.
The classification of nilpotent Lie algebras endowed with such structures has interesting geometrical applications; for instance,
it allows to construct complex nilmanifolds and study their geometric properties.
Let us recall that a nilmanifold is a compact quotient $\nilm$ of a connected, simply connected,
nilpotent Lie group $G$ by a lattice $\Gamma$ of maximal rank in $G$.
If the Lie algebra $\frg$ of $G$ has a complex structure~$J$, then a compact complex manifold $X=(\nilm,J)$
is defined in a natural way.

The problem of determining which nilpotent Lie algebras admit a complex structure is completely solved only up to dimension 6.
In real dimension 4 there are two nilpotent Lie algebras with a complex structure, namely, the abelian Lie algebra and
the Lie algebra underlying the Kodaira-Thurston manifold.
The classification of $6$-dimensional nilpotent Lie algebras having a complex structure was achieved by Salamon in~\cite{Salamon},
and there are precisely 18 isomorphism classes.
In higher dimensions only partial results are known (see~\cite{LUV-SnN} and the references therein).
In particular, note that there is no classification of real nilpotent Lie algebras of dimension~8.

In \cite{DF2000,DF2003} Dotti and Fino study the $8$-dimensional nilpotent Lie algebras $\frg$ that admit a
\emph{hypercomplex} structure, i.e. a pair of anticommuting complex structures $\{J_i\}_{i=1,2}$.
They prove that if $\frg$ admits such a structure, then $\frg$ must be 2-step
nilpotent and have first Betti number $b_1(\frg) \geq 4$.
They also find an explicit description of the $8$-dimensional nilpotent Lie
algebras admitting a hypercomplex structure in terms of families which depend on several real parameters.
However, it seems to be unclear whether these families contain an infinite number of
pairwise non-isomorphic Lie algebras or not.
Motivated by this fact, we address the following more general problem:
\begin{question}\label{Q}
Do there exist infinite non-isomorphic nilpotent Lie algebras in dimension 8 admitting complex structures?
\end{question}

In this paper we provide an affirmative answer to this question. In addition, we show
that there are infinitely many real homotopy types of $8$-dimensional nilmanifolds admitting a complex structure.
Moreover, the nilmanifolds that we construct can be endowed with both generalized Gauduchon and balanced Hermitian metrics.
It should be noted that our results are based on the theory of strongly non-nilpotent complex structures developed
in~\cite{LUV-SnN}.

The paper is structured as follows. In Section~\ref{sect2} we review the main results on complex structures found in~\cite{LUV-SnN}.
It turns out that the essentially new complex structures on nilpotent Lie algebras that arise in each even real dimension are those of strongly
non-nilpotent type. A complex structure~$J$ on a nilpotent Lie algebra $\frg$ is said to be \emph{strongly
non-nilpotent} (SnN for short) if the center of $\frg$ does not contain any non-trivial $J$-invariant ideal.
In~\cite[Theorem 4.1]{LUV-SnN} a structure result is obtained for the $8$-dimensional nilpotent Lie algebras admitting
an SnN complex structure in terms of the dimensions of their ascending central series.
Furthermore, a complete description of the SnN complex geometry is given (see~\cite[Propositions 4.12, 4.13, and 4.14]{LUV-SnN}).

We make use of~\cite[Proposition 4.12]{LUV-SnN} to answer Question~\ref{Q}. Such result describes the generic complex
equations of any SnN complex structure on $8$-dimensional nilpotent Lie algebras $\frg$ with ascending central series
$\{\frg_k\}_k$ of dimensions $(1, 3, 8)$, $(1, 3, 5, 8)$,
$(1, 3, 6, 8)$, or $(1, 3, 5, 6, 8)$
(see Proposition~\ref{complexification_(1,3,5,8)+(1,3,5,6,8)}).
In Section~\ref{sect3} we present a specific choice of complex parameters that allows us to construct
a family $\frg_{a}$, $a \in [0,\infty)$, of pairwise non-isomorphic $8$-dimensional
nilpotent Lie algebras endowed with complex structures
(see Theorem~\ref{teorema-main}).
Note that the ascending central series of each $\frg_a$ is of type $(1, 3, 5, 8)$, thus
$\frg_a$ is 4-step nilpotent and has first Betti number $b_1(\frg_a)=3$.

In Section~\ref{nilvariedades} we consider the family
$\frg_a$ with rational values of the parameter $a$ in order to show that in eight dimensions there are
infinitely many \emph{real} homotopy types of nilmanifolds admitting a complex structure (see Theorem~\ref{infinite}).
Notice that eight is the lowest dimension where this can occur, since for any even dimension less than or equal to 6,
only a finite number of real homotopy types of nilmanifolds exists.
Indeed, in six dimensions, Bazzoni and Mu\~noz prove in~\cite[Theorem~2]{BM2012}
that there are infinitely many \emph{rational} homotopy types of nilmanifolds,
but only 34 different real homotopy types.
We also compute the de Rham cohomology of the given nilmanifolds $N_a$, which allows us to
show that $N_a$ does not admit any symplectic structure for any $a$. 

The last section is devoted to study the existence of special Hermitian metrics on the nilmanifolds~$N_{a}$ endowed with
the strongly non-nilpotent complex structure~$J_{a,1}$ found in Section~\ref{sect3}.
We prove in Theorem~\ref{infinite-1-Gauduchon} that the complex nilmanifolds $X_a=(N_a,J_{a,1})$
have Hermitian metrics satisfying the $k$-th Gauduchon condition for every $k$~\cite{FWW}.
Moreover, $X_a$ also admits balanced~\cite{Mi} (hence, strongly Gauduchon~\cite{Pop0,Pop1}) Hermitian metrics
(see Theorem~\ref{infinite-balanced}).
Therefore, there are infinitely many real homotopy types of $8$-dimensional nilmanifolds
with both generalized Gauduchon and balanced metrics.

\medskip

\noindent{\textbf{Acknowledgments}}.
We are grateful to A. Fino for informing us about Question~\ref{Q}
and suggesting us to investigate it in
the class of strongly non-nilpotent complex structures.
This work has been partially supported by the projects Mineco (Spain) MTM2014-58616-P, and
Gobierno de Arag\'on/Fondo Social Europeo--Grupo Consolidado E15 Geometr\'{\i}a.

\section{Complex structures on nilpotent Lie algebras}\label{sect2}

\noindent In this section we collect some known results about real nilpotent Lie algebras (NLA for short)
endowed with complex structures. In particular, we pay special attention to real dimension~$8$
when the complex structure is of strongly non-nilpotent type, recalling the main ideas in~\cite{LUV-SnN}.

A well-known invariant of a Lie algebra $\frg$ is its \emph{ascending central series} $\{\frg_k\}_k$, whose terms are given by
\begin{equation}\label{ascending-series}
\left\{\begin{array}{l}
\frg_0=\{0\}, \text{ and } \\[4pt]
\frg_k=\{X\in\frg \mid [X,\frg]\subseteq \frg_{k-1}\}, \text{ for } k\geq 1.
\end{array}\right.
\end{equation}
Notice that $\frg_1=Z(\frg)$ is the center of~$\frg$.
A Lie algebra $\frg$ is \emph{nilpotent} if there is
an integer~$s\geq 1$ such that~$\frg_k=\frg$, for every~$k\geq s$.
In such case, the smallest integer $s$ satisfying the condition is called the
\emph{nilpotency step} of~$\frg$, and the Lie algebra is said to be \emph{$s$-step nilpotent}.

Let $J$ be a complex structure on an NLA $\frg$, that is,
an endomorphism $J\colon\frg\longrightarrow\frg$
fulfilling $J^2=-\textrm{Id}$ and the integrability condition~\eqref{Nijenhuis}.
Observe that the terms~$\frg_{k}$ in the series~\eqref{ascending-series}
may not be invariant under~$J$. For this reason, a new series~$\{\fra_{k}(J)\}_{k}$ adapted to the complex structure~$J$
is introduced in~\cite{CFGU-dolbeault}:
\begin{equation}\label{adapted-series}
\left\{\begin{array}{l}
\fra_0(J)=\{0\}, \text{ and } \\[4pt]
\fra_k(J)=\{X\in\frg \mid [X,\frg]\subseteq \fra_{k-1}(J)\ {\rm and\ } [JX,\frg]\subseteq \fra_{k-1}(J)\}, \text{ for } k\geq 1.
\end{array}\right.
\end{equation}
This series $\{\fra_{k}(J)\}_{k}$
is called the \emph{ascending $J$-compatible series of~$\frg$}.
Observe that every $\fra_k(J)\subseteq\frg_{k}$ is an even-dimensional $J$-invariant ideal of $\frg$, and
$\fra_1(J)$ is indeed the largest subspace of the center $\frg_1$ which is $J$-invariant.

Depending on the
behaviour of the series~$\{\fra_{k}(J)\}_{k}$,
complex structures on NLAs can be classified into different types:


\begin{definition}\label{tipos_J}
\cite{CFGU-dolbeault, LUV-SnN}
{\rm
A complex structure $J$ on a nilpotent Lie algebra $\frg$ is said to be
\begin{itemize}
\item[(i)] \emph{strongly non-nilpotent}, or \emph{SnN} for short, if $\fra_1(J)=\{0\}$;

\smallskip
\item[(ii)] \emph{quasi-nilpotent}, if $\fra_1(J)\neq\{0\}$; moreover, $J$ is called
 \begin{itemize}
 \item[(ii.1)] \emph{nilpotent}, if there exists an integer~$t>0$ such that~$\fra_t(J)=\frg$,
 \item[(ii.2)] \emph{weakly non-nilpotent}, if there is an integer~$t>0$ satisfying~$\fra_t(J)=\fra_l(J)$, for every~$l\geq t$,
           and~$\fra_t(J)\neq\frg$.
 \end{itemize}
\end{itemize}
}
\end{definition}

One can see that quasi-nilpotent complex structures on
NLAs of a given dimension can be constructed from
other complex structures defined on (strictly) lower dimensional NLAs (see \cite[Section~2]{LUV-SnN} for details).
Therefore, the essentially new complex structures that arise in each even real dimension are those of
strongly non-nilpotent type. That is to say, SnN complex structures constitute the remaining piece to completely understand
complex geometry on nilpotent Lie algebras.

In real dimension $4$ it is well known that SnN complex structures do not exist, whereas
in dimension $6$ one has the following result:

\begin{theorem}\label{structural-dim6}\cite{U,UV1}
Let $\frg$ be an NLA of real dimension $6$. If $\frg$ admits an SnN complex structure, then
the terms of its ascending central series $\{\frg_k\}_k$ have dimensions
$(\dim\frg_k)_k=$~$(1,3,6)$ or $(1,3,4,6)$.
\end{theorem}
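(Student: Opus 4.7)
The plan is to constrain the dimensions of the ascending central series $\{\frg_k\}_k$ step by step, starting with the center. Since $\fra_1(J)=\frg_1\cap J\frg_1$ is the largest $J$-invariant subspace of the center $\frg_1$, the SnN hypothesis $\fra_1(J)=\{0\}$ forces $\frg_1\cap J\frg_1=\{0\}$, so $2\dim\frg_1\leq \dim\frg=6$. Because $\frg$ is nilpotent, its center is non-trivial, giving $\dim\frg_1\in\{1,2,3\}$.

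I would first rule out $\dim\frg_1=3$. In this case $\frg=\frg_1\oplus J\frg_1$ as vector spaces. For any $X,Y\in J\frg_1$ one has $JX,JY\in\frg_1$, which are central, so the Nijenhuis tensor collapses to $N_J(X,Y)=[X,Y]$. Integrability then forces $[J\frg_1,J\frg_1]=0$, and together with the centrality of $\frg_1$ this makes $\frg$ abelian, contradicting $\dim Z(\frg)=3<6$.

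The main obstacle is ruling out $\dim\frg_1=2$. Choosing a basis $\{Z_1,Z_2\}$ of $\frg_1$, the integrability relations $N_J(Z_i,X)=0$ (since $Z_i$ are central) yield $J[JZ_i,X]=[JZ_i,JX]$ for every $X\in\frg$, so each image $\mathrm{Im}\,\mathrm{ad}(JZ_i)=[JZ_i,\frg]$ is a $J$-invariant, hence even-dimensional, subspace of $\frg$. The endomorphism $\mathrm{ad}(JZ_i)$ is nilpotent, its kernel contains $\frg_1$, and its image is an even-dimensional ideal contained in $[\frg,\frg]$. A case analysis on the possible ranks (ruling out the rank-$0$ case because it would place $JZ_i$ in the center, producing a non-zero $J$-invariant ideal of $\frg_1$) then forces the existence of a non-trivial $J$-invariant central element or a contradiction with integrability, both incompatible with SnN. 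This case analysis is the technical heart of the argument and is carried out in detail in \cite{U,UV1}.

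Finally, once $\dim\frg_1=1$ is established, let $Z$ span $\frg_1$. The identity $J[JZ,X]=[JZ,JX]$ shows $[JZ,\frg]$ is a non-zero $J$-invariant, hence even-dimensional, subspace; since $\mathrm{ad}(JZ)$ is nilpotent, its rank is $2$ or $4$. One then uses the description $\frg_2=\{X\in\frg\mid[X,\frg]\subseteq \R Z\}$ together with the $J$-invariance of the images $[JZ,\frg]$ and $[JX,\frg]$ to show $\dim\frg_2=3$. The two alternatives $(1,3,6)$ versus $(1,3,4,6)$ then correspond to whether $\frg_3=\frg$ or $\dim\frg_3=4$ and $\frg_4=\frg$, both compatible with nilpotency in dimension~$6$.
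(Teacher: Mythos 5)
The paper does not prove this theorem itself; it is quoted from \cite{U,UV1}, so there is no internal proof to compare against and your argument has to stand on its own. As it is written, it does not: the two steps that actually carry the theorem are asserted rather than proved. Your opening reduction is fine ($\fra_1(J)=\frg_1\cap J\frg_1=\{0\}$ gives $\dim\frg_1\le 3$), and the exclusion of $\dim\frg_1=3$ via $N_J(X,Y)=[X,Y]$ for $X,Y\in J\frg_1$ is correct and complete. But for $\dim\frg_1=2$ you set up the right objects (the $J$-invariant images $[JZ_i,\frg]$, the exclusion of rank $0$) and then write that the case analysis ``is carried out in detail in \cite{U,UV1}.'' That is precisely the technical heart of the theorem, and deferring it to the references means the case $\dim\frg_1=2$ is not ruled out in your proof. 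Likewise, once $\dim\frg_1=1$, the claim $\dim\frg_2=3$ is the other nontrivial input (one must exclude $\dim\frg_2=2$ and $\dim\frg_2=4$), and you only gesture at it.

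Two smaller points. First, your claim that $\mathrm{Im}\,\mathrm{ad}(JZ_i)$ is an \emph{ideal} is not justified: Jacobi gives $[Y,[JZ_i,X]]=[JZ_i,[Y,X]]+[[Y,JZ_i],X]$, and the second term need not lie in the image; fortunately you do not seem to use ideality, only $J$-invariance and evenness of dimension, which do follow from $J[JZ_i,X]=[JZ_i,JX]$. Second, in the last paragraph you must also exclude the pattern $(1,3,5,6)$; this follows from the general fact that the penultimate term of the ascending central series of a nilpotent Lie algebra has codimension at least $2$ (if $\frg_{s-1}=\frg_{s-1}$ had codimension $1$, writing $\frg=\frg_{s-1}+\R X$ would give $[\frg,\frg]\subseteq\frg_{s-2}$ and hence $\frg_{s-1}=\frg$), but you should say so explicitly since you invoke only ``compatibility with nilpotency.''
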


In fact, all the pairs $(\frg,J)$ with $\text{dim}\,\frg=6$ and $\fra_1(J)=\{0\}$
have been classified by means of their complex structure equations.
It should be noted that only two NLAs of this dimension admit SnN complex structures,
namely, $\frh_{19}^-$ and $\frh_{26}^+$ in the notation of~\cite{U,UV1}.

Concerning higher dimensions, \cite{LUV-SnN} provides
several general restrictions on the terms of the ascending central series of NLAs admitting
SnN complex structures.
Among them, we highlight the following one:

\begin{theorem}\label{prop_centro}\cite[Theorem 3.11]{LUV-SnN}
Let $(\frg, J)$ be a $2n$-dimensional nilpotent Lie algebra, with $n\geq 4$, endowed with a strongly
non-nilpotent complex structure $J$. Then, $1\leq \dim\frg_1 \leq n-3$.
\end{theorem}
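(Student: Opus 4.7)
The lower bound $\dim \frg_1 \geq 1$ is immediate, since the center of any nontrivial nilpotent Lie algebra is nontrivial. For the upper bound, set $m := \dim_{\R} \frg_1$. Because $\fra_1(J)$ is by construction the largest $J$-invariant subspace of $\frg_1$, the SnN condition $\fra_1(J) = \{0\}$ is equivalent to $\frg_1 \cap J(\frg_1) = \{0\}$, so that $V := \frg_1 \oplus J(\frg_1)$ is a $J$-invariant real subspace of $\frg$ of real dimension $2m$. The desired inequality $m \leq n-3$ is then equivalent to $\dim_{\R}(\frg / V) \geq 6$.

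The key inputs come from combining integrability with the centrality of $\frg_1$. For $X \in \frg_1$ and any $Y \in \frg$, the relations $[X, Y] = [X, JY] = 0$ collapse $N_J(X, Y) = 0$ to $[JX, JY] = J[JX, Y]$, showing that $\mathrm{ad}_{JX}$ is $\C$-linear on $\frg$. Specializing also $Y \in \frg_1$ forces $[JX, JY] = 0$, so $V \subseteq \ker(\mathrm{ad}_{JX})$. For $X \neq 0$, the operator $\mathrm{ad}_{JX}$ is nonzero---otherwise $JX \in \frg_1$ would give $X \in \fra_1(J) = \{0\}$---and its image is a nonzero $J$-invariant subspace of $\frg$ meeting $\frg_1$ only at $0$, since any $J$-invariant subspace of $\frg_1$ is trivial under SnN. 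Finally, the same Nijenhuis calculation gives $[JX, JY] = 0$ for all $X, Y \in \frg_1$, so $\{\mathrm{ad}_{JX} : X \in \frg_1\}$ is a commuting family of nilpotent $\C$-linear endomorphisms annihilating $V$.

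These facts assemble into an injective $\R$-linear map $X \mapsto \overline{\mathrm{ad}_{JX}}$ from $\frg_1$ into $\mathrm{Hom}_{\C}(\frg / V,\, \frg)$ whose image consists of maps with range disjoint from $\frg_1$. A crude count already forces $m < n$, but the \emph{main obstacle} is sharpening this to $m \leq n-3$. I expect one iterates the above analysis one level up the ascending central series: for $Z \in \frg_2$ the Nijenhuis identity together with $[Z, \frg], [Z, J\frg] \subseteq \frg_1$ should force the composition $\pi_V \circ \mathrm{ad}_{JZ} : \frg \to \frg / V$ to be $\C$-linear, with analogous statements at higher levels $\frg_3, \frg_4, \ldots$. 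Combining these complex-linearity constraints with the Jacobi identity and the commutativity of $\{\mathrm{ad}_{JX}\}_{X \in \frg_1}$ should produce at least three $\C$-linearly independent directions in $\frg / V$, equivalently $\dim_{\R}(\frg / V) \geq 6$. The delicate step is verifying that these newly constructed directions are genuinely independent modulo $V$ and do not collapse onto one another; this is presumably where the hypothesis $n \geq 4$ enters essentially, since by Theorem~\ref{structural-dim6} the weaker bound $\dim \frg_1 = 1 = n - 2$ is realized when $n = 3$.
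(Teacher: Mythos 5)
A preliminary remark on scope: the paper does not prove this statement at all --- it is imported verbatim from \cite[Theorem 3.11]{LUV-SnN} --- so there is no in-paper proof to compare against, and your proposal has to stand on its own. Judged that way, it establishes only the easy part. Your reformulation is correct: $\fra_1(J)=\frg_1\cap J\frg_1$, so the SnN hypothesis gives $V=\frg_1\oplus J\frg_1$ of real dimension $2m$, and the Nijenhuis computation correctly shows that for $X\in\frg_1$ the operator $\mathrm{ad}_{JX}$ is $\C$-linear, annihilates $V$, is nonzero for $X\neq 0$, and that these operators commute. This honestly yields $m\le n-1$. But the entire content of the theorem is the improvement from $n-1$ to $n-3$, and at exactly that point your argument switches to ``I expect'', ``should force'', ``should produce'': you never construct the three $\C$-independent directions in $\frg/V$, nor verify their independence, which you yourself identify as the delicate step. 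As written this is a plan for a proof, not a proof; the gap coincides with the theorem.

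There is also a local error worth flagging: the claim that $\mathrm{im}(\mathrm{ad}_{JX})$ meets $\frg_1$ only in $0$ does not follow from the reason you give. The image is $J$-invariant because $\mathrm{ad}_{JX}$ is $\C$-linear, but the intersection $\mathrm{im}(\mathrm{ad}_{JX})\cap\frg_1$ need not be $J$-invariant (the $J$-image of a central vector lying in the image need not be central), so you cannot invoke ``every $J$-invariant subspace of $\frg_1$ is trivial''; in nilpotent algebras the images of adjoint operators typically do meet the center. Finally, be aware that the actual argument in \cite{LUV-SnN} is not a soft dimension count of the kind you sketch: it is carried out by the ``doubly adapted basis'' method, building explicit generators compatible simultaneously with $J$ and with the ascending central series and running a substantial case analysis, with the Jacobi identity used repeatedly to kill degenerate configurations. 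The sharpness of the bound (in dimension $8$ one is forced to $\dim\frg_1=1=n-3$, by Theorem~\ref{teorema-estructura-acs}) is a sign of how tight that bookkeeping has to be, so the step you defer is the core difficulty rather than a routine verification.
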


Thanks to this result and using the \emph{doubly adapted basis method} developed in~\cite{LUV-SnN},
a structural result in the spirit of Theorem~\ref{structural-dim6} is proved for dimension $8$:

\begin{theorem}\label{teorema-estructura-acs}\cite[Theorem 4.1]{LUV-SnN}
Let $\frg$ be an NLA of real dimension $8$. If $\frg$ admits an SnN complex structure, then
the terms of its ascending central series $\{\frg_k\}_k$ have dimensions
$(\dim\frg_k)_k=$~$(1,3,8)$, $(1,3,5,8)$, $(1,3,6,8)$, $(1,3,5,6,8)$, $(1,4,8)$, $(1,4,6,8)$, $(1,5,8)$, or $(1,5,6,8)$.
\end{theorem}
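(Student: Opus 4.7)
The plan is to combine Theorem~\ref{prop_centro} with a careful analysis of how the doubly adapted basis method of~\cite{LUV-SnN} constrains the successive quotients $\frg_{k+1}/\frg_k$. Applying Theorem~\ref{prop_centro} with $n=4$ immediately yields $\dim \frg_1 = 1$, so every candidate sequence begins with~$1$ and terminates at~$8$ by nilpotency. Fix a generator $Z \in \frg_1$; the SnN hypothesis $\fra_1(J) = \{0\}$ says $J Z \notin \frg_1$, so $JZ$ survives non-trivially in $\frg/\frg_1$. Build a doubly adapted real basis $\{X_1, J X_1, \ldots, X_4, J X_4\}$ of $\frg$ with $X_1 = Z$ whose initial segments span subspaces compatible both with $J$ and with $\{\frg_k\}_k$, as far as the filtration permits, and express the structure constants subject to~\eqref{Nijenhuis}.

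The next step is to show $\dim \frg_2 \in \{3, 4, 5\}$. For the lower bound $\dim \frg_2 \geq 3$, one evaluates the Nijenhuis tensor on pairs involving $Z$ and $J Z$: integrability forces the image of $\mathrm{ad}_{J Z}$ to respect $\frg_1$, and combining with the fact that $\mathrm{ad}_X$ has rank at most~$1$ for $X \in \frg_2$ produces two additional classes in $\frg_2/\frg_1$ beyond $[Z]$ itself. For the upper bound $\dim \frg_2 \leq 5$, note that a larger $\frg_2$ would leave $\frg/\frg_2$ of dimension at most~$2$; a dimension count on the possible brackets $\frg \times \frg \to \frg_1$, together with the compatibility of~$J$ with this quotient imposed by~\eqref{Nijenhuis}, would then manufacture a $J$-invariant line inside $\frg_1$, contradicting $\fra_1(J) = \{0\}$.

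Once $\dim \frg_2$ is fixed, iterate the same strategy on $\frg_2 \subsetneq \frg_3 \subsetneq \cdots \subsetneq \frg_s = \frg$, using the induced complex data on successive quotients. For each $\dim \frg_2 \in \{3, 4, 5\}$, the admissible jumps $\dim \frg_{k+1} - \dim \frg_k$ are constrained by strict increase up to~$8$, by rank restrictions on $\mathrm{ad}_X$ for $X \in \frg_k$, and by the Nijenhuis identities read off the doubly adapted basis. A finite elimination then leaves exactly the eight listed sequences.

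The main obstacle I expect is the case $\dim \frg_2 = 3$, which admits four distinct continuations. Separating $(1,3,5,8)$ from $(1,3,6,8)$, and either from the longer $(1,3,5,6,8)$, requires tracking precisely which entries of the structure constants integrability forces to vanish once a candidate dimension of $\frg_3$ is chosen. The doubly adapted basis is exactly what keeps this combinatorially tractable: without the simultaneous adaptation to both $\{\frg_k\}_k$ and $J$, the intertwined Jacobi and Nijenhuis constraints rapidly yield an unmanageable system of quadratic relations among the structure constants.
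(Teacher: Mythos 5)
This paper does not prove Theorem~\ref{teorema-estructura-acs}; it quotes it from \cite[Theorem 4.1]{LUV-SnN}, where the proof is a lengthy case-by-case elimination carried out with the doubly adapted basis method. Your proposal reproduces that strategy in outline but supplies none of the substance, so it has genuine gaps at every step that does real work. The only complete step is the first one: Theorem~\ref{prop_centro} with $n=4$ does force $\dim\frg_1=1$. After that: (i) the lower bound $\dim\frg_2\ge 3$ is justified only by the assertion that integrability ``produces two additional classes in $\frg_2/\frg_1$''; nothing you write rules out $\dim\frg_2=2$, and this is precisely the kind of claim that requires writing out the Nijenhuis and Jacobi relations in a doubly adapted basis. (ii) The upper bound is justified by saying a larger $\frg_2$ ``would manufacture a $J$-invariant line inside $\frg_1$'' --- but $J$-invariant subspaces are even-dimensional and $\frg_1$ has already been shown to be one-dimensional, so as stated this is not a coherent contradiction mechanism. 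What one must actually produce is a nontrivial $J$-invariant ideal contained in the center, i.e.\ $\fra_1(J)\neq\{0\}$, and exhibiting such an ideal from the structure constants is exactly the hard computation you omit. (iii) The ``finite elimination'' of continuations is entirely absent: the theorem excludes, among others, $(1,3,4,\dots)$, $(1,3,7,8)$, $(1,4,5,\dots)$, $(1,4,7,8)$ and $(1,5,7,8)$, and each exclusion needs a separate argument intertwining the Jacobi identity with \eqref{Nijenhuis}; you explicitly flag this as ``the main obstacle'' and then do not address it.

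Two further remarks. First, some eliminations are purely Lie-theoretic and could have been dispatched cheaply: for any nilpotent $\frg$ of step $s$ one has $\dim(\frg/\frg_{s-1})\ge 2$, which already kills every candidate sequence ending in $(\dots,7,8)$; your sketch does not even record this. Second, the phrase ``$\mathrm{ad}_X$ has rank at most $1$ for $X\in\frg_2$'' is true (since $[X,\frg]\subseteq\frg_1$ with $\dim\frg_1=1$), but no inference is drawn from it --- it is invoked as if it implied the existence of two extra central-series elements, which it does not. In summary, the proposal is a plausible plan of attack that matches the method of \cite{LUV-SnN}, but it is not a proof: every dimension bound and every excluded sequence is asserted rather than derived.
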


Moreover, the complex structure equations of all the previous pairs $(\frg,J)$ with $\dim\frg=8$ and $\fra_1(J)=\{0\}$
are parametrized (see~\cite[Propositions 4.12, 4.13, and 4.14]{LUV-SnN}). For the aim of this paper,
we focus on the proposition:

\begin{proposition}\label{complexification_(1,3,5,8)+(1,3,5,6,8)}
{\rm \cite[Proposition 4.12]{LUV-SnN}}
Let $J$ be a strongly non-nilpotent complex structure on an 8-dimensional nilpotent
Lie algebra $\frg$ such that $\dim \frg_1=1$ and $\dim \frg_2=3$.
Then, there is a complex basis $\{\omega^1,\omega^2,\omega^3,\omega^4\}$ 
of bidegree $(1,0)$ with respect to $J$ satisfying the structure equations\footnote{\textsc{Notation:}
$\omega^{ij}=\omega^i\wedge\omega^j$ and $\omega^{i\bar k}=\omega^i\wedge\omega^{\bar k}$, where
$\omega^{\bar k}$ is the conjugate of $\omega^k$.}
\begin{equation*}
\left\{\begin{array}{rcl}
d\omega^1 &\!\!\!=\!\!\!& 0,\\[3pt]
d\omega^2 &\!\!\!=\!\!\!& A\,\omega^{1\bar 1} -B(\omega^{14}- \omega^{1\bar 4}),\\[4pt]
d\omega^3 &\!\!\!=\!\!\!& (C-D)\,\omega^{12}-E\,(\omega^{14}- \omega^{1\bar 4})+F\,\omega^{1\bar 1}+(G+D)\,\omega^{1\bar 2}\\[3pt]
 && -H\,(\omega^{24}-\omega^{2\bar 4}) +(C-G)\,\omega^{2\bar1}+K\,\omega^{2\bar 2},\\[4pt]
d\omega^4 &\!\!\!=\!\!\!& L\,\omega^{1\bar 1}+M\,\omega^{1\bar 2}+N\,\omega^{1\bar 3}-\bar{M}\,\omega^{2\bar 1}
  +i\,s\,\omega^{2\bar 2}+P\,\omega^{2\bar 3}-\bar{N}\,\omega^{3\bar 1}-\bar{P}\,\omega^{3\bar2},\end{array}\right.
\end{equation*}
for some coefficients $s\in\mathbb R$ and $A,B,C,D,E,F,G,H,K,L,M,N,P\in\mathbb C$.
\end{proposition}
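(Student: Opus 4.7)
The plan is to apply the doubly adapted basis method of \cite[\S 3]{LUV-SnN}. The hypotheses $\fra_1(J)=\{0\}$ (SnN condition) and $\dim\frg_1=1$ force $\frg_1\cap J\frg_1=\{0\}$, so if $X_1$ generates the center then $JX_1\notin\frg_1$. Under the additional assumption $\dim\frg_2=3$ I would choose a real basis $\{X_1,\ldots,X_8\}$ of $\frg$ distributed along the flag $\frg_1\subset\frg_2\subset\frg_3\subset\frg$ in a way that is also compatible with the $J$-action; this is the doubly adapted basis construction of \cite{LUV-SnN}. Setting $\omega^k=(X_{2k-1}-iJX_{2k-1})^*$ then produces a $(1,0)$-basis along which each structure equation can be analyzed step by step.

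From this basis, the shape of $d\omega^k$ is controlled by the position of $X_{2k-1}$ in the central filtration. The closure $d\omega^1=0$ is immediate, since $X_1$ is central and thus $\omega^1$ annihilates every bracket. For $d\omega^2$, the fact that $X_3$ sits just above $\frg_2$ restricts the pairs $(i,j)$ for which $[X_i,X_j]$ can have an $X_3$- or $JX_3$-component; after writing all terms as linear combinations of $\omega^{ij}$ and $\omega^{i\bar j}$ and imposing the integrability condition $d\omega^2\in\Lambda^{2,0}\oplus\Lambda^{1,1}$, one arrives at the shape $A\,\omega^{1\bar 1}-B(\omega^{14}-\omega^{1\bar 4})$, where the coupling of $\omega^{14}$ with $-\omega^{1\bar 4}$ reflects the reality of the underlying bracket together with the vanishing of the $(0,2)$-part.

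For $d\omega^3$ and $d\omega^4$ I would enumerate all monomials $\omega^{ij}$, $\omega^{i\bar j}$ compatible with the placement of $X_5,\ldots,X_8$ in the filtration, then prune using $N_J=0$ and the Jacobi identity $d^2\omega^k=0$. The same coupling $\omega^{i4}\leftrightarrow-\omega^{i\bar 4}$ seen in $d\omega^2$ reappears in $d\omega^3$ through the terms with coefficients $E$ and $H$, while the combinations $(C-D),(G+D),(C-G)$ arise naturally when one separates the real bracket $[X,Y]$ from the mixed contribution $[JX,Y]+[X,JY]$ in complex coordinates, a parametrization intrinsic to the doubly adapted framework. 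For $d\omega^4$ the fact that $X_7$ sits at the top of the filtration forces all $(2,0)$-terms to vanish, and the pairings $M\leftrightarrow-\bar M$, $N\leftrightarrow-\bar N$, $P\leftrightarrow-\bar P$, together with the purely imaginary coefficient $is$ of $\omega^{2\bar 2}$, express the reality relations imposed on $d\omega^4$ by the conjugation action on the chosen basis.

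The main obstacle will be the meticulous normalization that removes every superfluous parameter and brings each $d\omega^k$ to the precise form stated. This requires exhausting the freedom of unitriangular changes of $(1,0)$-basis that preserve the doubly adapted filtration, and systematically verifying via $d^2=0$ that the remaining coefficients $A,\ldots,P,s$ cannot be further simplified without breaking compatibility with the flag $\frg_1\subset\frg_2\subset\frg_3\subset\frg$. The computation is algebraic but lengthy, and forms the technical core of \cite[Proposition 4.12]{LUV-SnN}.
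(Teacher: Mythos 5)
This proposition is not proved in the paper you were given: it is imported verbatim from \cite[Proposition 4.12]{LUV-SnN}, so there is no in-paper argument to compare your sketch against. You do name the right engine (the doubly adapted basis method of \cite{LUV-SnN}), but your outline contains a concrete error. You take $X_1$ to span the center $\frg_1$, set $\omega^1=(X_1-iJX_1)^*$, and assert that $d\omega^1=0$ ``since $X_1$ is central and thus $\omega^1$ annihilates every bracket.'' Centrality of $X_1$ means that $X_1$ never occurs as an \emph{argument} of a nonzero bracket, i.e.\ that no monomial $\omega^{1j}$ or $\omega^{1\bar j}$ could appear in any $d\omega^k$; it does not mean that $[\frg,\frg]$ has no component along $X_1$, which is what $d\omega^1=0$ requires. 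Here $\frg$ is $4$-step nilpotent with $\dim\frg_1=1$, so $\frg_1\subset[\frg,\frg]$ and the form dual to the central direction is \emph{not} closed: in the stated normal form the center sits in $\Imag\omega^4$, and $d\omega^4\neq0$ while $\omega^{1j}$-terms appear throughout. Your identification of the basis with the flag is therefore upside down; $\omega^1$ must be dual to a direction complementary to the derived algebra. This matters because the anti-Hermitian pattern in $d\omega^4$ (the pairs $M,-\bar M$ and $N,-\bar N$ and $P,-\bar P$, and the purely imaginary coefficient $i\,s$) is forced precisely by the closedness of $\Real\omega^4$, a mechanism your appeal to generic ``reality relations'' does not capture.

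Beyond that, there is a genuine gap of content: the proposition \emph{is} its explicit normal form, and every step that produces it --- why $d\omega^2$ contains only $\omega^{1\bar1}$ and the coupled combination $\omega^{14}-\omega^{1\bar4}$, why $d\omega^4$ has no $(2,0)$-part while $d\omega^3$ does, why exactly the combinations $(C-D)$, $(G+D)$, $(C-G)$ survive, and which changes of $(1,0)$-basis have already been spent in the normalization --- is deferred in your write-up to ``lengthy algebra.'' That computation is the proof; stopping before it establishes only the plausibility of the statement. If you intend to use the result, do as this paper does and cite \cite{LUV-SnN}; if you intend to prove it, the bracket-by-bracket analysis of the doubly adapted basis in that reference must actually be carried out.
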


Notice that the coefficients above must fulfill the (non-linear) equations that guarantee the Jacobi identity of the Lie algebra,
i.e. $d(d \omega^k)=0$ for $1\leq k\leq 4$.

\section{A family of non-isomorphic 8-dimensional nilpotent Lie algebras with complex structures}\label{sect3}

\noindent
In this section we find an infinite family of (non-isomorphic) $8$-dimensional nilpotent Lie algebras admitting complex structures,
providing an affirmative answer to Question~\ref{Q}.

In the complex equations given in Proposition~\ref{complexification_(1,3,5,8)+(1,3,5,6,8)},
we choose the following particular values of the parameters:

\vskip.2cm

\hskip2.5cm $A = E = K = L = P = 0,\quad \ B=N=s=1$,

\vskip.25cm

\hskip2.5cm $
C = \frac i2,  \quad
D = \frac{3\,i}{2},  \quad
F = a, \quad
G = -\frac i2, \quad
H = -i, \quad M = i\,b$,

\vskip.2cm

\noindent where $a,b\in\mathbb R$. That is to say, we consider the complex structure equations
\begin{equation}\label{eleccion}
\left\{\begin{array}{rcl}
d\omega^1 &\!\!\!=\!\!\!& 0,\\[3pt]
d\omega^2 &\!\!\!=\!\!\!& -\omega^{14} + \omega^{1\bar 4},\\[4pt]
d\omega^3 &\!\!\!=\!\!\!& a\,\omega^{1\bar 1} - i\,(\omega^{12} - \omega^{1\bar 2} - \omega^{2\bar1})
  + i\,(\omega^{24}-\omega^{2\bar 4}),\\[4pt]
d\omega^4 &\!\!\!=\!\!\!& i\,b\,(\omega^{1\bar 2} + \omega^{2\bar 1}) + \omega^{1\bar 3}
  +i\,\omega^{2\bar 2}-\omega^{3\bar 1}.
  \end{array}\right.
\end{equation}
It is easy to see that the Jacobi identity holds, so for
each pair $(a,b)\in\mathbb R^2$,
these structure equations define a nilpotent Lie algebra of real dimension 8
endowed with a complex structure. We will study the underlying real nilpotent Lie algebras and show that there is an
infinite number of them.

Let $\{e^i\}_{i=1}^8$ be the real basis determined by
\begin{equation*}
\begin{array}{lllll}
& e^1=-2\,\Real\omega^1, \ \ & e^2=2\,\Imag\omega^1, \ \ & e^3=-2\,\Real\omega^4, \ \ & e^4=2\,\Real\omega^2, \\[5pt]
& e^5=-2\,\Imag\omega^2, \ \ & e^6=-4\,\Real\omega^3, \ \ & e^7=4\,\Imag\omega^3, \ \ & e^8=4\,\Imag\omega^4.
\end{array}
\end{equation*}
Equivalently,
$$
\omega^1=\frac 12\,(-e^1+i\,e^2), \quad
\omega^2=\frac 12\,(e^4-i\,e^5), \quad
\omega^3=\frac 14\,(-e^6+i\,e^7), \quad
\omega^4=\frac 14\,(-2\,e^3+i\,e^8).
$$
Then, for any $(a,b)\in\mathbb R^2$, one has a real nilpotent Lie algebra $\frg_{a,b}$ defined by the following
structure equations coming from~\eqref{eleccion}:
\begin{equation}\label{ec-reales-FII}
\frg_{a,b}: \quad
\left\{\begin{split}
de^1 &= de^2 = de^3= 0,\\
de^4 &= e^{13},\\
de^5 &= e^{23},\\
de^6 &=3\,e^{14} + \,e^{25} - 2\,e^{35},\\
de^7 &= 2\,a\,e^{12} + e^{15}+e^{24} + 2\,e^{34},\\
de^8 &= - 2b\,e^{14} + e^{16} - 2b\, e^{25} + e^{27} - 2\,e^{45},
\end{split}\right.
\end{equation}
where $e^{ij}=e^i\wedge e^j$.
The integrable almost complex structure on $\frg_{a,b}$ is expressed in this real basis~as
\begin{equation}\label{J-a,b}
J_{a,b}(e^1)=e^2,\quad J_{a,b}(e^3)=\frac{1}{2}e^8,\quad J_{a,b}(e^4)=e^5,\quad J_{a,b}(e^6)=e^7.
\end{equation}

In the next lines, we will prove the following result:

\begin{proposition}\label{teorema-no-iso-NLA-bis}
If the nilpotent Lie algebras $\frg_{a,b}$ and $\frg_{a',b'}$ are isomorphic,
then there is a non-zero real number $\rho$ such that
$$
a' = \pm\, \rho\, a, \quad\mbox{ and }\quad b' = \rho\, b.
$$
\end{proposition}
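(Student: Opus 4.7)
The plan is to exploit the compatibility of any isomorphism $\varphi:\frg_{a,b}\to\frg_{a',b'}$ with the filtration by the ascending central series, and to then read off the claimed relations from the finitely many structure equations in \eqref{ec-reales-FII}. A direct inspection of those equations shows that the series has dimensions $(1,3,5,8)$ for every $(a,b)$, with $\frg_1=\langle e_8\rangle$, $\frg_2=\langle e_6,e_7,e_8\rangle$ and $\frg_3=\langle e_4,e_5,e_6,e_7,e_8\rangle$; hence dually $\varphi^*$ preserves the flag $V_1\subset V_2\subset V_3\subset \frg^*$ with $V_1=\langle e^1,e^2,e^3\rangle$, $V_2=\langle e^1,\dots,e^5\rangle$ and $V_3=\langle e^1,\dots,e^7\rangle$. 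Furthermore, the induced map $\bar d: V_2/V_1\to\Lambda^2 V_1$ has image $\langle e^{13},e^{23}\rangle = e^3\wedge V_1$, so $\langle e^3\rangle\subset V_1$ is intrinsically singled out as $\{\alpha\in V_1 : \alpha\wedge V_1\subseteq\operatorname{Im}\bar d\}$, forcing $\varphi^*(e'^3)=\mu\, e^3$ for some $\mu\neq 0$.

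Writing $\varphi^*(e'^i)=\sum_j A_{ij}\, e^j$, the identity $d\circ\varphi^*=\varphi^*\circ d$ applied successively on $e'^4$ and $e'^5$ fixes the $(4,4), (4,5), (5,4), (5,5)$-entries of $A$ in terms of the $3\times 3$ block $P:=(A_{ij})_{1\le i,j\le 3}$ (whose last row is $(0,0,\mu)$). Applied next on $e'^6$ and $e'^7$, matching the coefficients of $e^{14}$ and $e^{25}$ yields the polynomial relations
\begin{equation*}
A_{11}^2+\tfrac{1}{3}A_{21}^2 \;=\; 3\,A_{12}^2+A_{22}^2, \qquad A_{11}A_{21}\;=\;3\,A_{12}A_{22},
\end{equation*}
which combine into $(A_{11}^2-A_{22}^2)(A_{11}^2-3A_{12}^2)=0$. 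The vanishing of the second factor would force the upper-left $2\times 2$ minor of $P$ to vanish, contradicting $\det P\neq 0$; so $A_{22}=\epsilon A_{11}$ with $\epsilon\in\{\pm 1\}$, and then $A_{21}=3\epsilon A_{12}$. Comparison of the $e^{34}$ and $e^{35}$ coefficients in the same two identities then eliminates the remaining freedom: it forces $A_{12}=A_{13}=A_{21}=A_{23}=0$ and $\mu=\epsilon A_{11}$. Setting $\lambda:=A_{11}$, the block $P$ is $\lambda\operatorname{diag}(1,\epsilon,\epsilon)$, and the diagonal shape propagates to the rest of $\varphi$ through the remaining equations.

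With $P$ reduced to this form, the relations on $a, b$ drop out of the remaining compatibility equations. Isolating the coefficient of $e^{12}$ in $d(\varphi^* e'^7)=\varphi^*(de'^7)$ (the only places where $a$ and $a'$ appear) yields $a'=\epsilon\lambda\, a$; and isolating the coefficient of $e^{14}$ (or, redundantly, of $e^{25}$) in $d(\varphi^* e'^8)=\varphi^*(de'^8)$ yields $b'=\lambda\, b$. Setting $\rho:=\lambda\in\R\setminus\{0\}$ completes the argument. The main difficulty will be in the second paragraph: the system of polynomial constraints is intricate, and the essential observation is that the factorization $(A_{11}^2-A_{22}^2)(A_{11}^2-3A_{12}^2)=0$, combined with the invertibility of $\varphi^*$, rules out a spurious $\sqrt{3}$-branch that would otherwise produce non-diagonal candidate isomorphisms.
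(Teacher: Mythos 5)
Your strategy is the same as the paper's (which, after a harmless rescaling of the basis, proves exactly your chain: block-triangularity of the matrix of $\varphi^*$ from the preserved flag, then full lower-triangularity, then extraction of the parameter relations), and your first two paragraphs check out: the intrinsic characterization of $\langle e^3\rangle$ as $\{\alpha\in V_1:\alpha\wedge V_1\subseteq\operatorname{Im}\bar d\}$ is a nice shortcut past the paper's homogeneous-system argument, the two relations $A_{11}^2+\tfrac13A_{21}^2=3A_{12}^2+A_{22}^2$ and $A_{11}A_{21}=3A_{12}A_{22}$ are what the $e^{14}$ and $e^{25}$ coefficients actually give, the factorization $(A_{11}^2-A_{22}^2)(A_{11}^2-3A_{12}^2)=0$ is correct, the $\sqrt3$-branch is indeed killed by invertibility, and the four equations coming from the $e^{34}$ and $e^{35}$ coefficients of the $e'^6$ and $e'^7$ conditions do force $A_{12}=A_{13}=A_{21}=A_{23}=0$ and $\mu=\epsilon A_{11}$.

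The gap is in the last paragraph. With $P=\lambda\operatorname{diag}(1,\epsilon,\epsilon)$, the coefficient of $e^{12}$ in $d(\varphi^*e'^7)=\varphi^*(de'^7)$ is not $2a\lambda^3=2a'\epsilon\lambda^2$ but
\begin{equation*}
2a\,\lambda^3 \;=\; 2a'\epsilon\lambda^2+\lambda A_{52}-\epsilon\lambda A_{41},
\end{equation*}
and $A_{41},A_{52}$ are sub-diagonal entries that are still completely free at this stage; similarly the $e^{14}$ coefficient of the $e'^8$ condition reads $3A_{86}-2bA_{88}=-2b'\epsilon\lambda^3+\lambda A_{64}+2\epsilon\lambda^2A_{51}$, which involves the undetermined entries $A_{86},A_{88},A_{64},A_{51}$. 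So neither relation ``drops out'' of a single coefficient: you must first use the $e^{13},e^{23}$ coefficients of the $e'^6,e'^7$ conditions to express $A_{64},A_{65},A_{74},A_{75}$ in terms of the entries $A_{4j},A_{5j}$ ($j\le3$), then the $e^{14},e^{34},e^{45},e^{35}$ coefficients of the $e'^8$ condition to determine $A_{86},A_{87},A_{88}$ and $A_{42}$, and only after substituting all of this do the $e^{12}$ (resp.\ $e^{24},e^{25}$) coefficients collapse to $a'=\epsilon\lambda a$ and $b'=\lambda b$. This elimination is precisely the content of the paper's Lemma~\ref{lema3} and is where most of the work lies; as written, your final step asserts the conclusion rather than deriving it. (A small additional point: after $A_{22}^2=A_{11}^2$ you should rule out the degenerate subcase $A_{11}=A_{22}=0$, $A_{21}=\pm3A_{12}\neq0$, which is compatible with both relations and with $\det P\neq0$; it is eliminated only by the subsequent $e^{34},e^{35}$ equations.)
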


As a consequence, we are able to provide a positive answer to Question~\ref{Q}:

\begin{theorem}\label{teorema-main}
The real Lie algebras $\frg_{a,1}$, $a \in [0,\infty)$, define a family of pairwise non-isomorphic
nilpotent Lie algebras admitting complex structures.
\end{theorem}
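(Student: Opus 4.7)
The plan is to derive Theorem~\ref{teorema-main} as an essentially immediate corollary of Proposition~\ref{teorema-no-iso-NLA-bis}, which already does all the nontrivial work by constraining any Lie algebra isomorphism $\frg_{a,b}\cong\frg_{a',b'}$ to act on the parameters by $a'=\pm\rho\,a$ and $b'=\rho\,b$ for some nonzero $\rho\in\R$.

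First, I would verify that each member of the proposed family indeed admits a complex structure. This is already built into the construction: for every $a\in[0,\infty)$, the structure equations \eqref{eleccion} (with $b=1$) satisfy the Jacobi identity by the preceding observation, so they define an 8-dimensional nilpotent complex Lie algebra. Passing to the real basis described just above \eqref{ec-reales-FII} yields the real Lie algebra $\frg_{a,1}$, and the integrable almost complex structure $J_{a,1}$ on it is given explicitly by \eqref{J-a,b}. Thus every $\frg_{a,1}$ is a real nilpotent Lie algebra admitting a complex structure.

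Next I would establish pairwise non-isomorphism. Suppose $a,a'\in[0,\infty)$ satisfy $\frg_{a,1}\cong\frg_{a',1}$. By Proposition~\ref{teorema-no-iso-NLA-bis}, there exists $\rho\in\R\setminus\{0\}$ with
\[
a'=\pm\,\rho\,a,\qquad 1=\rho\cdot 1.
\]
The second equation forces $\rho=1$, so the first reduces to $a'=\pm a$. Since both parameters lie in the nonnegative half-line, the sign ambiguity collapses and we must have $a'=a$. Hence the assignment $a\mapsto\frg_{a,1}$ is injective on $[0,\infty)$, producing an uncountable family of pairwise non-isomorphic 8-dimensional nilpotent Lie algebras all of which admit complex structures.

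The genuine obstacle is not this deduction but rather Proposition~\ref{teorema-no-iso-NLA-bis} itself, which must be proved separately; the natural strategy there would be to use isomorphism-invariant data of $\frg_{a,b}$ (the dimensions and successive quotients of the ascending central series, the commutator ideal, and distinguished scalar invariants extracted from the structure constants in \eqref{ec-reales-FII}) to track how $(a,b)$ transforms under a general Lie algebra isomorphism. Once that is in hand, Theorem~\ref{teorema-main} is merely the specialization to $b=b'=1$, together with the elementary observation that $\rho=1$ eliminates all rescaling freedom.
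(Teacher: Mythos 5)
Your proposal is correct and follows essentially the same route as the paper: the authors likewise deduce the theorem directly from Proposition~\ref{teorema-no-iso-NLA-bis} by setting $b=b'=1$, forcing $\rho=1$ and hence $a'=\pm a=a$ on $[0,\infty)$, and then observe that $J_{a,1}$ from \eqref{J-a,b} equips each $\frg_{a,1}$ with a complex structure. No gaps; the real work is indeed in the proposition, which the paper proves via Lemmas~\ref{lema1}--\ref{lema3}.
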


\begin{proof}
Using Proposition~\ref{teorema-no-iso-NLA-bis} for $b'=b=1$ and for non-negative values of $a$ and $a'$,
we see that
if $\frg_{a,1}$ is isomorphic to $\frg_{a',1}$ then
$\rho=1$ and $a' = a$. Hence, different values of $a$ in $[0,\infty)$ produce non-isomorphic Lie algebras $\frg_{a,1}$.
Since $J_{a,1}$ in \eqref{J-a,b} is an integrable almost complex structure on $\frg_{a,1}$,
any nilpotent Lie algebra in this family
has complex structures.
\end{proof}

\begin{remark}\label{clasif-g-a,b}
{\rm
Any Lie algebra $\frg_{a,b}$ is isomorphic to one and only one of the following: $\frg_{0,0}$, $\frg_{1,0}$,
or $\frg_{a,1}$, with $a \in [0,\infty)$.
Indeed:
\begin{itemize}
\item If $a\neq 0$ and $b=0$, then $\frg_{a,0}$ is isomorphic to $\frg_{1,0}$. In fact, it is enough to
apply to the equations~\eqref{ec-reales-FII} the isomorphism given by
${\rm diag}\,(a^{-1},a^{-1},a^{-1},a^{-2},a^{-2},a^{-3},a^{-3},a^{-4}) \in {\rm GL}(8,\mathbb{R})$.
\item If $b\neq 0$, then the Lie algebra $\frg_{a,b}$ is isomorphic to $\frg_{|\frac{a}{b}|,1}$.
To see this, we first
apply to the equations~\eqref{ec-reales-FII} the transformation defined by
${\rm diag}\,(b^{-1},b^{-1},b^{-1},b^{-2},b^{-2},b^{-3},b^{-3},b^{-4}) \in {\rm GL}(8,\mathbb{R})$ in order to get an isomorphism from
$\frg_{a,b}$ to $\frg_{\frac{a}{b},1}$
Now, ${\rm diag}\,(1,-1,-1,-1,1,-1,1,-1)$ gives an isomorphism
between the Lie algebras $\frg_{\frac{a}{b},1}$ and $\frg_{-\frac{a}{b},1}$.
\item Finally, applying Proposition~\ref{teorema-no-iso-NLA-bis} to the Lie algebras $\frg_{0,0}$, $\frg_{1,0}$, and $\frg_{a,1}$, $a \in [0,\infty)$,
one concludes that any two of them are never isomorphic.
\end{itemize}
}
\end{remark}

\medskip

\noindent{\textbf{Proof of Proposition~\ref{teorema-no-iso-NLA-bis}}.}

\medskip

\noindent The rest of this section is devoted to the proof of Proposition~\ref{teorema-no-iso-NLA-bis}.
First, we normalize several coefficients in the structure equations~\eqref{ec-reales-FII}
in order to get simpler equations for the Lie algebras.
With this aim,
let us introduce the new basis $\{v^i\}_{i=1}^8$ given by
$$
\begin{aligned}
v^1 &= e^1, &
v^2 &= \frac{1}{\sqrt 3}\,e^2, &
v^3 &=\frac{2}{\sqrt 3}\,e^3, &
v^4 &=\frac{2}{\sqrt 3}\,e^4, \\[5pt]
v^5 &= \frac{2}{3}\,e^5, &
v^6 &= \frac{2}{3\,\sqrt 3}\,e^6, &
v^7 &= \frac{2}{3}\,e^7, &
v^8 &= \frac{2}{3\,\sqrt 3}\,e^8 + \frac{4\,b}{9\,\sqrt 3}\,e^6.
\end{aligned}
$$
In terms of this basis, the structure equations~\eqref{ec-reales-FII} become
\begin{equation}\label{ec-reales-FII-caseiii}
\left\{\begin{split}
dv^1 &= dv^2 =dv^3= 0,\\
dv^4 &= v^{13},\\
dv^5 &= v^{23},\\
dv^6 &= v^{14}+\,v^{25}-v^{35},\\
dv^7 &= \alpha\,v^{12}+v^{15}+v^{24}+v^{34},\\
dv^8 &= v^{16}-2\beta\,v^{25}+v^{27}-\beta\,v^{35}-v^{45},
\end{split}\right.
\end{equation}
where $\alpha = \frac{4\,a}{\sqrt3}$ and $\beta= \frac{2\,b}{3}$. From now on, we will denote the real Lie algebra
by $\frm_{\alpha,\beta}$
(instead of~$\frg_{a,b}$) to indicate that we are considering the structure equations~\eqref{ec-reales-FII-caseiii} above.

We first observe that the algebras $\mathfrak m_{\alpha,\beta}$ share the same values for the usual invariants
of nilpotent Lie algebras. More precisely,
it can be seen that the dimensions of the terms in the ascending central series are~$(1,3,5,8)$,
and those of the descending central series are~$(8,5,3,1)$. Furthermore,
the Lie algebras cannot be distinguished by their cohomology groups
(see Section~\ref{nilvariedades} for more details).
Hence, we are led to directly study the existence of an isomorphism between
the real Lie algebras $\mathfrak m_{\alpha,\beta}$ in this family.

Let~$\mathfrak m_{\alpha,\beta}$ and~$\mathfrak m_{\alpha',\beta'}$
be two nilpotent Lie algebras defined by the structure equations~\eqref{ec-reales-FII-caseiii}
for $(\alpha,\beta)$ and $(\alpha',\beta')$, respectively.
Let $f \colon \mathfrak m_{\alpha,\beta} \longrightarrow \mathfrak m_{\alpha',\beta'}$ be a homomorphism of Lie algebras.
Hence, the dual map $f^* \colon \mathfrak m_{\alpha',\beta'}^* \longrightarrow \mathfrak m_{\alpha,\beta}^*$ extends to a map
$F \colon \bigwedge^*\mathfrak m_{\alpha',\beta'}^* \longrightarrow \bigwedge^*\mathfrak m_{\alpha,\beta}^*$ that commutes with the differentials, i.e. $F\circ d=d\circ F$.

Suppose that there exists a Lie algebra isomorphism between $\mathfrak m_{\alpha,\beta}$ and~$\mathfrak m_{\alpha',\beta'}$, and
let $\{v^k\}_{k=1}^8$ (resp. $\{v'^{\,k}\}_{k=1}^8$) be a basis for~$\mathfrak m_{\alpha,\beta}^*$
(resp. $\mathfrak m_{\alpha',\beta'}^*$) satisfying equations~\eqref{ec-reales-FII-caseiii}
with $(\alpha,\beta)$ (resp. $(\alpha',\beta')$).
In terms of these bases, any Lie algebra isomorphism is defined by
\begin{equation}\label{cambio-base}
F(v'^{\,i}) =\sum_{j=1}^8 \lambda_j^i\, v^j, \quad i=1,\ldots,8,
\end{equation}
satisfying the conditions
\begin{equation}\label{condiciones}
F(dv^i) = d(F(v'^{\,i})), \ \text{ for each }1\leq i\leq 8,
\end{equation}
where the matrix $\Lambda=(\lambda^i_j)_{i,j=1,\ldots,8}$ belongs to ${\rm GL}(8,\mathbb{R})$.

\medskip

In what follows, we will prove that $\lambda^1_1 \not= 0$, and
$$\alpha' = \pm\, \lambda^1_1\, \alpha, \quad\mbox{  }\quad \beta' = \lambda^1_1\, \beta.$$
Notice that this result implies Proposition~\ref{teorema-no-iso-NLA-bis}, since $a,b,a',b'$ are related to
$\alpha,\beta,\alpha',\beta'$ by $\alpha = \frac{4\,a}{\sqrt3}$, $\beta= \frac{2\,b}{3}$, $\alpha' = \frac{4\,a'}{\sqrt3}$ and $\beta'= \frac{2\,b'}{3}$.


\begin{lemma}\label{lema1}
The elements $F(v'^{\,i}) \in \mathfrak m_{\alpha,\beta}^*$ satisfy the following conditions:
\begin{equation*}
\begin{array}{rl}
& F(v'^{\,i}) \wedge v^{123} =0, \ \ \text{ for } \ i=1,2,3;\\[4pt]
& F(v'^{\,i}) \wedge v^{12345} =0,\ \ \text{ for } \ i=4,5;\\[4pt]
& F(v'^{\,i}) \wedge v^{1234567} =0,\ \ \text{ for } \ i=6,7.
\end{array}
\end{equation*}
In particular, the matrix $\Lambda=(\lambda^i_j)_{i,j} \in {\rm GL}(8,\mathbb{R})$ that determines $F$
is a block triangular matrix, and thus
$$\det(\lambda^i_j)_{i,j=1,2,3} \cdot
  \det(\lambda^i_j)_{i,j=4,5} \cdot
  \det(\lambda^i_j)_{i,j=6,7}\cdot \lambda^8_8 = \det \Lambda \,\neq\, 0.$$
\end{lemma}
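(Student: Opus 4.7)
The plan is to exploit the identity $F\circ d = d\circ F$ from \eqref{condiciones}, together with the explicit structure equations \eqref{ec-reales-FII-caseiii}, to show that $F$ preserves a natural descending filtration of $\frm^*_{\alpha,\beta}$ by 1-form subspaces. This filtration is the annihilator of the ascending central series $\{\frm_{\alpha,\beta,k}\}_k$, which any Lie algebra isomorphism must respect; for proving the lemma, however, it is most efficient to work with it directly on the dual side. The crucial observation, verified by direct inspection of \eqref{ec-reales-FII-caseiii}, is that each successive piece of the filtration is \emph{independent} of the parameters $(\alpha,\beta)$.

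The first step is to compute the closed 1-forms. Writing $\eta = \sum_{i=1}^{8} c_i\, v^i$ and expanding $d\eta$, the coefficients of the linearly independent 2-forms $v^{13}$, $v^{14}$, $v^{15}$, $v^{16}$, $v^{23}$, $v^{45}$ in $d\eta$ are $c_4, c_6, c_7, c_8, c_5, -c_8$, which force $c_4 = c_5 = c_6 = c_7 = c_8 = 0$ regardless of $(\alpha,\beta)$. Hence $\ker d|_{\frm^*_{\alpha,\beta}} = \langle v^1, v^2, v^3\rangle$, and the analogous identification holds on the primed side. Since $F$ intertwines $d$, it sends closed 1-forms to closed 1-forms, giving $F(v'^{\,i}) \in \langle v^1, v^2, v^3\rangle$ for $i=1,2,3$, which is the first claim.

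The next step is to iterate twice. Define $W_2 := \{\eta \in \frm^*_{\alpha,\beta} : d\eta \in \bigwedge^2\langle v^1, v^2, v^3\rangle\}$; examining the components of $d\eta$ lying outside $\langle v^{12}, v^{13}, v^{23}\rangle$, the coefficients of $v^{14}, v^{15}, v^{16}$ force $c_6 = c_7 = c_8 = 0$, while $c_4, c_5$ remain unconstrained. Thus $W_2 = \langle v^1,\ldots, v^5\rangle$, uniformly in $(\alpha,\beta)$. Since $dv'^{\,4}, dv'^{\,5} \in \bigwedge^2\langle v'^{\,1}, v'^{\,2}, v'^{\,3}\rangle$, the previous step combined with $F\circ d = d\circ F$ yields $F(v'^{\,i}) \in W_2$ for $i=4,5$. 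Defining similarly $W_3 := \{\eta : d\eta \in \bigwedge^2\langle v^1,\ldots, v^5\rangle\}$, only $dv^8$ contains 2-form components (namely $v^{16}, v^{27}, v^{45}$) outside $\bigwedge^2\langle v^1,\ldots, v^5\rangle$, which forces $c_8 = 0$; hence $W_3 = \langle v^1,\ldots, v^7\rangle$ and, correspondingly, $F(v'^{\,i}) \in \langle v^1,\ldots, v^7\rangle$ for $i=6,7$.

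Writing $F(v'^{\,i}) = \sum_j \lambda^i_j\, v^j$, the three containments translate directly into the vanishing pattern asserted in the lemma, making $\Lambda$ block lower-triangular with diagonal blocks of sizes $3, 2, 2, 1$; the factorization of $\det\Lambda$ as the product of the four diagonal-block determinants is then immediate, and its non-vanishing follows from $\Lambda \in {\rm GL}(8,\R)$. The only subtle point in the argument is to verify that each filtration computation is genuinely uniform in $(\alpha,\beta)$, since these parameters appear in $dv^7$ and $dv^8$; this works because $dv^7$ and $dv^8$ always carry the ``diagnostic'' components $v^{15}, v^{16}, v^{45}$ with coefficients $\pm 1$, so the vanishing of $c_7$ and $c_8$ is forced regardless of the parameter values.
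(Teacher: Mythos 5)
Your proof is correct and takes essentially the same route as the paper's: both use $F\circ d=d\circ F$ and the structure equations \eqref{ec-reales-FII-caseiii} in three successive stages, the paper by setting to zero explicit coefficients of $d(F(v'^{\,i}))-F(dv'^{\,i})$ and you by the equivalent observation that $F$ must preserve the parameter-independent subspaces $\ker d=\langle v^1,v^2,v^3\rangle$, $\{\eta:d\eta\in\bigwedge^2\ker d\}=\langle v^1,\ldots,v^5\rangle$, and $\{\eta:d\eta\in\bigwedge^2\langle v^1,\ldots,v^5\rangle\}=\langle v^1,\ldots,v^7\rangle$. The only (harmless) slip is that $v^{45}$ actually lies inside $\bigwedge^2\langle v^1,\ldots,v^5\rangle$; the vanishing of $c_8$ in your third step is forced by the $v^{16}$ component alone, so the conclusion is unaffected.
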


\begin{proof}
First, we observe that from the equations~\eqref{ec-reales-FII-caseiii}, the equalities \eqref{condiciones} for $i=1,2,3$
simply read as $d(F(v'^{\,i}))=0$. Performing this calculation, one immediately gets
\begin{equation}\label{m3-reales1}
\lambda^i_j=0, \text{ for } 1\leq i\leq 3 \text{ and } 4\leq j\leq 8,
\end{equation}
and thus we have $F(v'^{\,i})\in \langle v^1,v^2,v^3\rangle$, or equivalently $F(v'^{\,i}) \wedge v^{123} =0$, for $i=1,2,3$.

Bearing in mind the values \eqref{m3-reales1}, one can see that \eqref{condiciones} for $i=4,5$ can be written as
\begin{equation}\label{exxpr}
\begin{split}
0 = d(F(v'^{\,i})) - F(dv'^{\,i}) &= (\lambda^{i-3}_2\,\lambda^3_1 - \lambda^{i-3}_1\,\lambda^3_2 + \alpha\,\lambda^i_7)\,v^{12}
+ (\lambda^{i-3}_3\,\lambda^3_1 - \lambda^{i-3}_1\,\lambda^3_3 + \lambda^i_4)\,v^{13} \\[2pt]
&
\ \ + \lambda^i_6\,v^{14} + \lambda^i_7\,v^{15}
+ \lambda^i_8\,v^{16}
+ (\lambda^{i-3}_3\,\lambda^3_2 - \lambda^{i-3}_2\,\lambda^3_3 + \lambda^i_5)\,v^{23}
+ \lambda^i_7\,v^{24} \\[2pt]
&
\ \ + (\lambda^i_6 - 2\,\beta\,\lambda^i_8)\,v^{25}
+ \lambda^i_8\,v^{27} + \lambda^i_7\,v^{34}
- (\lambda^i_6 + \beta\,\lambda^i_8)\,v^{35}
+ \lambda^i_8\,v^{45}.
\end{split}
\end{equation}
As a consequence, the vanishing of the coefficients in $v^{13}$, $v^{14}$, $v^{15}$, $v^{16}$, and $v^{23}$, implies that
$\lambda^i_6 = \lambda^i_7 = \lambda^i_8 = 0$,
and
\begin{equation}\label{m3-reales4}
\lambda^i_4 = \lambda^{i-3}_1\,\lambda^3_3 - \lambda^{i-3}_3\,\lambda^3_1, \qquad
\lambda^i_5 = \lambda^{i-3}_2\,\lambda^3_3 - \lambda^{i-3}_3\,\lambda^3_2,\qquad \mbox{ for } i=4,5.
\end{equation}
Therefore, $F(v'^{\,i})\in \langle v^1,\ldots,v^5\rangle$, for $i=4,5$, as desired.

Let us now consider $i=6,7$ in the equalities \eqref{condiciones}. It is easy to check that
$$
d(F(v'^{\,i})) - F(dv'^{\,i}) = \lambda^i_8\,v^{16} + \xi_i,
$$
where $\xi_i \in \langle v^{16} \rangle^{\perp} \subset \bigwedge^2\langle v^1,\ldots,v^8\rangle$.
Here, and in what follows, the orthogonal is taken with respect to the inner product in $\bigwedge^2\langle v^1,\ldots,v^8\rangle$
defined by declaring the standard basis $\{v^{i\,j}\}_{1 \leq i< j \leq 6}$ to be orthonormal.
Therefore, the equalities \eqref{condiciones} for $i=6,7$ in particular imply
\begin{equation*}
\lambda^6_8 = \lambda^7_8 = 0.
\end{equation*}
Hence, $F(v'^{\,i})\in\langle v^1,\ldots,v^7\rangle$, for $i=6,7$, and the proof of the lemma is complete.
\end{proof}

We now make use of Lemma~\ref{lema1} to prove the following sharper result.

\begin{lemma}\label{lema2}
The elements $F(v'^{\,i}) \in \mathfrak m_{\alpha,\beta}^*$
additionally satisfy $F(v'^{\,i}) \wedge v^{i} =0$, for $i=1,2,3$,
and
$$
F(v'^{\,4}) \wedge v^{1234} =0,\quad
F(v'^{\,5}) \wedge v^{1235} =0,\quad
F(v'^{\,6}) \wedge v^{123456} =0,\quad \text{ and } \quad
F(v'^{\,7}) \wedge v^{123457} =0.
$$
Therefore, $\Lambda=(\lambda^i_j)_{i,j} \in {\rm GL}(8,\mathbb{R})$
is a lower triangular matrix, and
$$
\prod_{i=1}^8 \lambda^i_i = \det \Lambda \,\neq\, 0.
$$
\end{lemma}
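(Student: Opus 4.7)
The plan is to continue imposing the intertwining condition $F\circ d = d\circ F$ on the remaining generators $v'^{\,4},\ldots,v'^{\,7}$ and to match coefficients in $\bigwedge^2\mathfrak m_{\alpha,\beta}^*$, using the block structure of Lemma~\ref{lema1} at each step to refine the admissible shape of $\Lambda$. For $i=4,5$, since $dv'^{\,4}=v'^{13}$ and $dv'^{\,5}=v'^{23}$, condition \eqref{condiciones} becomes
\begin{equation*}
F(v'^{\,k})\wedge F(v'^{\,3}) = \lambda^{k+3}_4\,v^{13} + \lambda^{k+3}_5\,v^{23},\qquad k=1,2,
\end{equation*}
and by Lemma~\ref{lema1} each left-hand side already lies in $\bigwedge^2\langle v^1,v^2,v^3\rangle$. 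The $v^{12}$-coefficients give $\lambda^k_1\lambda^3_2-\lambda^k_2\lambda^3_1=0$ for $k=1,2$, so two $2\times 2$ minors of the top-left $3\times 3$ block of $\Lambda$ vanish. Combined with the invertibility of that block, a short case analysis first rules out $\lambda^3_3=0$ (otherwise the $(4,5)$-block of Lemma~\ref{lema1} would be degenerate) and then forces $\lambda^3_1=\lambda^3_2=0$, which is exactly $F(v'^{\,3})\wedge v^3=0$. Matching the $v^{13}, v^{23}$-coefficients expresses the $(4,5)$-$(4,5)$ sub-block of $\Lambda$ as $\lambda^3_3$ times its top-left $2\times 2$ sub-block.

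I would next treat $i=6,7$. Expanding $F(dv'^{\,i})=d(F(v'^{\,i}))$ and matching the coefficients of $v^{14}, v^{15}, v^{24}, v^{25}$ yields the two quadratic relations
\begin{equation*}
(\lambda^1_1)^2+(\lambda^2_1)^2 = (\lambda^1_2)^2+(\lambda^2_2)^2,\qquad \lambda^1_1\lambda^2_1 = \lambda^1_2\lambda^2_2,
\end{equation*}
while the coefficients of $v^{34}, v^{35}$ yield four further linear relations expressing $\lambda^1_3$ and $\lambda^2_3$ through the remaining top-block entries and $\lambda^3_3$. The quadratic pair, together with $\lambda^1_1\lambda^2_2-\lambda^1_2\lambda^2_1\neq 0$, admits only the two sign-symmetric solution families $(\lambda^1_1,\lambda^2_1)=\pm(\lambda^2_2,\lambda^1_2)$. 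Substituting either family into the four linear relations and using $\lambda^3_3\neq 0$ forces $\lambda^1_2=\lambda^2_1=\lambda^1_3=\lambda^2_3=0$, with $\lambda^2_2=\lambda^3_3$ and $\lambda^1_1=\pm\lambda^3_3$. Hence $F(v'^{\,i})\wedge v^i=0$ for $i=1,2$, and the formulas from the previous step give $\lambda^4_5=\lambda^5_4=0$, i.e.\ $F(v'^{\,4})\wedge v^{1234}=F(v'^{\,5})\wedge v^{1235}=0$.

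The same $i=6,7$ equations also produce $\lambda^6_7=\lambda^3_3(\lambda^1_1\lambda^1_2+\lambda^2_1\lambda^2_2)$ and $\lambda^7_6=2\lambda^1_1\lambda^2_1\lambda^3_3$; both vanish at the values just obtained, which establishes $F(v'^{\,6})\wedge v^{123456}=0$ and $F(v'^{\,7})\wedge v^{123457}=0$. Consequently $\Lambda$ is lower triangular, and $\det\Lambda=\prod_{i=1}^8\lambda^i_i$ follows. The main obstacle I anticipate is the case distinction of the middle paragraph: the quadratic relations alone permit a non-diagonal, sign-symmetric family of solutions, and excluding all but the diagonal one requires feeding each candidate back into the four linear relations from the $v^{34}, v^{35}$-coefficients of both $i=6$ and $i=7$ simultaneously; this is where most of the algebraic bookkeeping will lie.
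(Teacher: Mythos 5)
Your proposal is correct: each relation you extract is a genuine consequence of the intertwining conditions \eqref{condiciones}, and the quadratic pair together with the four linear $v^{34},v^{35}$-relations does force $\lambda^1_2=\lambda^2_1=\lambda^1_3=\lambda^2_3=0$ in both sign families, after which $\lambda^4_5=\lambda^5_4=\lambda^6_7=\lambda^7_6=0$ follows from the formulas you record. The strategy (coefficient matching against the block structure of Lemma~\ref{lema1}) is the paper's, but your elimination is organized quite differently. The paper's opening move is the condition \eqref{condiciones} for $i=8$: the $v^{36}$- and $v^{37}$-coefficients give a homogeneous system with invertible matrix $(\lambda^i_j)_{i,j=6,7}$, whence $\lambda^1_3=\lambda^2_3=0$ at the outset; you never touch $i=8$, instead carrying $\lambda^1_3,\lambda^2_3$ into the $i=6,7$ equations and recovering their vanishing from the $v^{34},v^{35}$-coefficients. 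Correspondingly, your route to $\lambda^3_1=\lambda^3_2=0$ is self-contained: the two vanishing minors make the first two columns of the top $3\times3$ block proportional whenever $(\lambda^3_1,\lambda^3_2)\neq(0,0)$, contradicting Lemma~\ref{lema1} directly (so you do not actually need the separate step ruling out $\lambda^3_3=0$), whereas the paper needs $\lambda^1_3=\lambda^2_3=0$ first in order to factor $\det(\lambda^i_j)_{i,j=1,2,3}$. Finally, the paper disposes of the top $2\times2$ block more directly: the $v^{25}$-coefficients of $i=6,7$ yield $(\lambda^1_2)^2+(\lambda^2_2)^2-\lambda^2_2\lambda^3_3=0$ and $\lambda^1_2(\lambda^3_3+2\lambda^2_2)=0$, forcing $\lambda^1_2=0$, and then the $v^{14}$-coefficient of $i=7$ gives $\lambda^2_1=0$; your sign-symmetric case analysis $(\lambda^1_1,\lambda^2_1)=\pm(\lambda^2_2,\lambda^1_2)$ costs more bookkeeping but already delivers $\lambda^2_2=\lambda^3_3$ and $\lambda^1_1=\pm\lambda^3_3$, which the paper only establishes in Lemma~\ref{lema3}. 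In short, you trade the short $i=8$ computation for a longer case analysis inside the $i=6,7$ step; both routes close the argument.
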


\begin{proof}
Let us first recall that all the conditions in Lemma~\ref{lema1} and those included along its proof must be satisfied in order to
ensure that $F$ is an isomorphism of Lie algebras.

A direct calculation shows that
\begin{equation*}
\begin{split}
d(F(v'^{\,8})) - F(dv'^{\,8}) =& - (\lambda^1_3\,\lambda^6_6 + \lambda^2_3\,\lambda^7_6)\,v^{36}
- (\lambda^1_3\,\lambda^6_7 + \lambda^2_3\,\lambda^7_7)\,v^{37} + \xi,
\end{split}
\end{equation*}
where $\xi \in \langle v^{36},v^{37} \rangle^{\perp} \subset \bigwedge^2\langle v^1,\ldots,v^8\rangle$.
Now, the condition \eqref{condiciones} for $i=8$ implies the
vanishing of the coefficients in $v^{36}$ and $v^{37}$ above, so the following homogeneous system
must be satisfied:
$$\begin{pmatrix}
\lambda^6_6 &\lambda^7_6\\ \lambda^6_7&\lambda^7_7
\end{pmatrix}\begin{pmatrix}
\lambda^1_3\\ \lambda^2_3
\end{pmatrix} = \begin{pmatrix}
0\\ 0
\end{pmatrix}.$$
Since by Lemma~\ref{lema1} one has $\det(\lambda^i_j)_{i,j=6,7}\not=0$, we conclude that $\lambda^1_3=\lambda^2_3=0$.
Therefore,
\begin{equation}\label{lema2-F2}
F(v'^{\,i})\in\langle v^1,v^2\rangle, \text{ for } i=1,2.
\end{equation}

By the equations~\eqref{exxpr} for $i=4,5$, one observes
that the vanishing of the coefficient in the term~$v^{12}$ gives the following homogeneous system
$$
\begin{pmatrix}
\lambda^1_2 & - \lambda^1_1 \\
\lambda^2_2 & - \lambda^2_1
\end{pmatrix}
\begin{pmatrix} \lambda^3_1 \\ \lambda^3_2 \end{pmatrix}
=
\begin{pmatrix} 0 \\ 0 \end{pmatrix}.
$$
By Lemma~\ref{lema1} and~\eqref{lema2-F2} we get
$\det(\lambda^i_j)_{i,j=1,2} \cdot \lambda^3_3 = \det(\lambda^i_j)_{i,j=1,2,3} \not=0$,
hence $\lambda^3_1 = \lambda^3_2 = 0$.
Consequently,
\begin{equation}\label{lema2-F3}
F(v'^{\,3})\in\langle v^3\rangle,
\end{equation}
as stated in the lemma. Now, \eqref{m3-reales4} reduces to
\begin{equation}\label{lema2-aux}
\lambda^4_4 = \lambda^{1}_1\,\lambda^3_3, \qquad \lambda^4_5 = \lambda^{1}_2\,\lambda^3_3, \qquad
\lambda^5_4 = \lambda^{2}_1\,\lambda^3_3, \qquad \lambda^5_5 = \lambda^{2}_2\,\lambda^3_3.
\end{equation}

A direct computation bearing in mind \eqref{lema2-F2} and \eqref{lema2-F3} gives
\begin{equation*}
\begin{split}
d(F(v'^{\,6})) - F(dv'^{\,6}) &= \big( \lambda^6_6  - \lambda^1_2\,\lambda^4_5 - \lambda^2_2\,\lambda^5_5 \big)\,v^{25}
+ \big( \lambda^6_7 + \lambda^3_3\,\lambda^5_4 \big)\,v^{34}
- \big( \lambda^6_6 - \lambda^3_3\,\lambda^5_5 \big)\,v^{35} + \vartheta_1,\\[2pt]
d(F(v'^{\,7})) - F(dv'^{\,7}) &= \big( \lambda^7_6  - \lambda^1_2\,\lambda^5_5 - \lambda^2_2\,\lambda^4_5 \big)\,v^{25}
+ \big( \lambda^7_7 - \lambda^3_3\,\lambda^4_4 \big)\,v^{34}
- \big( \lambda^7_6  + \lambda^3_3\,\lambda^4_5 \big)\,v^{35} + \vartheta_2,
\end{split}
\end{equation*}
where $\vartheta_1, \vartheta_2 \in \langle v^{25},v^{34},v^{35} \rangle^{\perp}  \subset  \bigwedge^2\langle v^1,\ldots,v^8\rangle$.
Therefore, the equalities \eqref{condiciones} for $i=6,7$ in particular imply that
the coefficients in $v^{34}$ and $v^{35}$ above are zero, and using~\eqref{lema2-aux} we obtain
\begin{equation}\label{m3-reales-6}
\lambda^6_6=\lambda^2_2\,(\lambda^3_3)^2, \qquad \lambda^6_7=-\lambda^2_1\,(\lambda^3_3)^2, \qquad
   \lambda^7_6=-\lambda^1_2\,(\lambda^3_3)^2, \qquad \lambda^7_7=\lambda^1_1\,(\lambda^3_3)^2.
\end{equation}
Furthermore, the coefficients in $v^{25}$ must also vanish, so taking into account~\eqref{lema2-aux}
together with the fact that $\lambda^3_3\neq 0$,
we arrive at the following equations:
$$
(\lambda^1_2)^2 + (\lambda^2_2)^2 - \lambda^2_2\,\lambda^3_3 = 0, \qquad
\lambda^1_2\,(\lambda^3_3 + 2\,\lambda^2_2) = 0.
$$
We observe that $\lambda^1_2=0$. Indeed, if $\lambda^1_2$ is non-zero, then the second expression gives
$\lambda^3_3=-2\,\lambda^2_2$ and replacing it in the first one, we arrive at
$\det(\lambda^i_j)_{i,j=1,2} =0$, which is a contradiction.
Hence, one must have $\lambda^1_2=0$, which
in turn implies $\lambda^4_5=\lambda^7_6=0$ and thus allows
us to ensure that
$$F(v'^{\,1})\in\langle v^1 \rangle, \quad \quad
F(v'^{\,4})\in\langle v^1, \ldots, v^4\rangle, \quad \text{ and } \quad
F(v'^{\,7})\in\langle v^1, \ldots, v^{5}, v^7\rangle,
$$
as desired. In particular, $\lambda^1_1\not=0$.

Finally, using~\eqref{lema2-aux} we get
\begin{equation*}
\begin{split}
d(F(v'^{\,7})) - F(dv'^{\,7}) &= - \big( \lambda^1_1\,\lambda^5_4 + \lambda^2_1\,\lambda^4_4 \big)\,v^{14}
+ \zeta
= - 2\,\lambda^1_1\,\lambda^2_1\,\lambda^3_3\,v^{14} + \zeta,
\end{split}\end{equation*}
where $\zeta \in \langle v^{14} \rangle^{\perp} \subset \bigwedge^2\langle v^1,\ldots,v^8\rangle$.
Now, the equality \eqref{condiciones} for $i=7$ implies $\lambda^2_1=0$,
and consequently, also $\lambda^5_4=\lambda^6_7=0$.
Therefore,
$$F(v'^{\,2})\in\langle v^2 \rangle,
\quad \quad
F(v'^{\,5})\in\langle v^1, v^2, v^{3}, v^5\rangle, \quad \text{ and } \quad
F(v'^{\,6})\in\langle v^1, \ldots, v^6\rangle,
$$
concluding the proof of the lemma.
\end{proof}

We next make use of the previous result to relate the values of the parameters $(\alpha,\beta)$ and $(\alpha',\beta')$.


\begin{lemma}\label{lema3}
If the nilpotent Lie algebras $\mathfrak m_{\alpha,\beta}$ and~$\mathfrak m_{\alpha',\beta'}$
are isomorphic, then
$$\alpha' = \pm \lambda^1_1\, \alpha, \qquad \beta' = \lambda^1_1\, \beta.$$
\end{lemma}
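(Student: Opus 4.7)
The plan is to push the conditions $F\circ d = d\circ F$ further by examining the three remaining generators $v'^{\,6},\,v'^{\,7},\,v'^{\,8}$, whose coboundaries are the only ones carrying $\alpha',\beta'$. By Lemma~\ref{lema2} and the identities already derived in its proof, $\Lambda$ is lower triangular with $\lambda^4_4=\lambda^1_1\lambda^3_3$, $\lambda^5_5=\lambda^2_2\lambda^3_3$, $\lambda^6_6=\lambda^2_2(\lambda^3_3)^2$, $\lambda^7_7=\lambda^1_1(\lambda^3_3)^2$; moreover, once $\lambda^1_2=0$ is used in the $v^{25}$ equation of $d(F(v'^{\,6}))=F(dv'^{\,6})$, the relation $(\lambda^2_2)^2=\lambda^2_2\lambda^3_3$ together with $\lambda^2_2\neq 0$ gives $\lambda^2_2=\lambda^3_3$.

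First, to uncover the $\pm$ sign in $\alpha'=\pm\lambda^1_1\alpha$, I would read off the coefficient of $v^{14}$ in $d(F(v'^{\,6}))=F(dv'^{\,6})$: on the left only $\lambda^6_6\,dv^6$ contributes $\lambda^6_6$, while on the right only $F(v'^{\,1})\wedge F(v'^{\,4})$ contributes $\lambda^1_1\lambda^4_4$. Thus $\lambda^6_6=\lambda^1_1\lambda^4_4$; substituting the diagonal values and $\lambda^2_2=\lambda^3_3$ yields $(\lambda^3_3)^3=(\lambda^1_1)^2\lambda^3_3$, and since $\lambda^3_3\neq 0$ we conclude $(\lambda^3_3)^2=(\lambda^1_1)^2$, i.e.\ $\lambda^3_3=\pm\lambda^1_1$.

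Next, the coefficient of $v^{12}$ in $d(F(v'^{\,7}))=F(dv'^{\,7})$ gives $\alpha\,\lambda^7_7=\alpha'\lambda^1_1\lambda^2_2+\lambda^1_1\lambda^5_2-\lambda^2_2\lambda^4_1$. Using the known diagonal values this would give $\alpha'=\lambda^3_3\alpha$ provided the error $E:=\lambda^1_1\lambda^5_2-\lambda^3_3\lambda^4_1$ vanishes. To prove $E=0$, extract the coefficient of $v^{13}$ in the same identity to get $\lambda^7_4=\lambda^1_1\lambda^5_3-\lambda^3_3\lambda^4_1$, and the coefficients of $v^{24}$ and $v^{34}$ in $d(F(v'^{\,8}))=F(dv'^{\,8})$ to obtain $\lambda^8_7=\lambda^3_3\lambda^7_4+\lambda^1_1\lambda^3_3\lambda^5_2$ and $\lambda^8_7=\lambda^1_1\lambda^3_3\lambda^5_3$. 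Eliminating first $\lambda^8_7$ and then $\lambda^7_4$ from these three equations yields $\lambda^1_1\lambda^5_2=\lambda^3_3\lambda^4_1$, so $E=0$ and $\alpha'=\lambda^3_3\alpha=\pm\lambda^1_1\alpha$.

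Finally, for $\beta'$, read off the coefficients of $v^{25}$ and $v^{35}$ in $d(F(v'^{\,8}))=F(dv'^{\,8})$. After substituting $\lambda^8_8=\lambda^1_1(\lambda^3_3)^3$ (from the $v^{16}$ coefficient), the expression for $\lambda^8_6$ coming from the $v^{14}$ coefficient together with $\lambda^6_4=\lambda^1_1\lambda^4_3+\lambda^3_3\lambda^5_1$ (the $v^{13}$ coefficient in $d(F(v'^{\,6}))=F(dv'^{\,6})$), the formula $\lambda^7_5=\lambda^3_3(\lambda^4_3-\lambda^4_2)$ (the $v^{23}$ coefficient from Step~2), the relation $\lambda^1_1\lambda^4_2=\lambda^3_3\lambda^5_1$ (the $v^{12}$ coefficient in $d(F(v'^{\,6}))=F(dv'^{\,6})$), and finally $(\lambda^3_3)^2=(\lambda^1_1)^2$, both equations collapse to the symmetric pair
\[
2\lambda^4_2=\lambda^3_3(\beta\lambda^1_1-\beta'),\qquad 2\lambda^4_2=\lambda^3_3(\beta'-\beta\lambda^1_1).
\]
Adding them forces $\lambda^4_2=0$, after which either equation gives $\beta'=\lambda^1_1\beta$ (using $\lambda^3_3\neq 0$). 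The main obstacle is purely the bookkeeping: each identity $d(F(v'^{\,i}))=F(dv'^{\,i})$ yields a host of scalar equations coupling many off-diagonal entries of $\Lambda$, and the clean relations for $\alpha'$ and $\beta'$ emerge only after eliminating carefully chosen intermediaries ($\lambda^8_7$ and $\lambda^7_4$ in the step for $\alpha'$; $\lambda^8_6$, $\lambda^5_1$, and $\lambda^7_5$ in the step for $\beta'$).
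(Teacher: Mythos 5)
Your proposal is correct and follows essentially the same route as the paper: after Lemma~\ref{lema2}, both arguments extract $\lambda^2_2=\lambda^3_3$ and $(\lambda^3_3)^2=(\lambda^1_1)^2$ from the $v^{14}$ and $v^{25}$ coefficients of the condition for $i=6$, then determine the off-diagonal entries $\lambda^5_1,\lambda^5_2,\lambda^5_3,\lambda^6_4,\lambda^7_4,\lambda^7_5,\lambda^8_6,\lambda^8_7,\lambda^8_8$ from the same list of coefficients ($v^{12},v^{13},v^{23}$ for $i=6,7$ and $v^{14},v^{24},v^{25},v^{34},v^{35}$ plus $v^{16}$ or $v^{45}$ for $i=8$) and eliminate them to obtain $\alpha'=\pm\lambda^1_1\alpha$ and $\beta'=\lambda^1_1\beta$. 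The only differences are organizational (you keep $\lambda^3_3$ symbolic rather than substituting $\pm\lambda^1_1$, and you read $\lambda^8_8$ off the $v^{16}$ rather than the $v^{45}$ coefficient), and all the scalar relations you quote agree with those computed in the paper.
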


\begin{proof}
Let us first recall that all the conditions in Lemma~\ref{lema2} and in its proof must be satisfied in order to have
an isomorphism $F$ of the Lie algebras $\mathfrak m_{\alpha,\beta}$ and~$\mathfrak m_{\alpha',\beta'}$.
In particular, the expressions \eqref{lema2-aux}
and \eqref{m3-reales-6} are reduced to
\begin{equation}\label{valores-lema3}
\lambda^4_4=\lambda^1_1\,\lambda^3_3, \qquad
\lambda^5_5=\lambda^2_2\,\lambda^3_3, \qquad
\lambda^6_6=\lambda^2_2\,(\lambda^3_3)^2, \qquad
\lambda^7_7=\lambda^1_1\,(\lambda^3_3)^2.
\end{equation}
In order to prove the lemma, we need to study more deeply the conditions~\eqref{condiciones} for $i=6,7,8$
(notice that~\eqref{condiciones} are trivially fulfilled for $1\leq i\leq 5$).

Let us start with $i=6$. A direct computation applying \eqref{valores-lema3} gives us
\begin{equation*}
\begin{split}
d(F(v'^{\,6})) - F(dv'^{\,6}) &= \lambda^3_3 \left[ \lambda^2_2\,\lambda^3_3 - (\lambda^1_1)^2 \right]\,v^{14}
+ \lambda^2_2\,\lambda^3_3\,\left( \lambda^3_3 - \lambda^2_2\, \right)\,v^{25} + \xi,
\end{split}
\end{equation*}
where $\xi \in \langle v^{14},v^{25} \rangle^{\perp} \subset \bigwedge^2\langle v^1,\ldots,v^8\rangle$.
Since $\lambda^2_2\,\lambda^3_3\neq 0$, from the condition~\eqref{condiciones} for $i=6$ it follows that $\lambda^3_3=\lambda^2_2$
and $(\lambda^2_2)^2 = (\lambda^1_1)^2$, that is,
\begin{equation}\label{valores-lema3-bis}
\lambda^3_3=\lambda^2_2=\pm\lambda^1_1.
\end{equation}
Let us remark that this greatly simplifies~\eqref{valores-lema3}, which reduces to
\begin{equation}\label{valores-lema3-red}
\lambda^4_4=\pm(\lambda^1_1)^2, \qquad
\lambda^5_5=(\lambda^1_1)^2, \qquad
\lambda^6_6=\pm(\lambda^1_1)^3, \qquad
\lambda^7_7=(\lambda^1_1)^3.
\end{equation}
Furthermore,
the conditions~\eqref{condiciones} for $i=6,7$
become
\begin{equation*}
\begin{split}
0 = d(F(v'^{\,6})) - F(dv'^{\,6}) =& - \lambda^1_1\left[ \lambda^4_2 \mp \lambda^5_1 \right]\,v^{12}
+ \left[ \lambda^6_4 - \lambda^1_1\,(\lambda^4_3 \pm \lambda^5_1) \right]\,v^{13}
+ \left[ \lambda^6_5  \mp \lambda^1_1\,(\lambda^5_2 + \lambda^5_3) \right]\,v^{23}, \\[2pt]
0 = d(F(v'^{\,7})) - F(dv'^{\,7}) =& - \lambda^1_1\left[ \lambda^5_2  \mp \lambda^4_1
- \lambda^1_1\,(\alpha\,\lambda^1_1 \mp \alpha') \right]\,v^{12}
+ \left[ \lambda^7_4 - \lambda^1_1\,(\lambda^5_3  \mp \lambda^4_1) \right]\,v^{13} \\
&
+ \left[ \lambda^7_5 \pm \lambda^1_1\,(\lambda^4_2 - \lambda^4_3) \right]\,v^{23}.
\end{split}
\end{equation*}
Hence, it is easy to see that
\begin{equation}\label{valores-reducidos}
\begin{aligned}
\lambda^5_1 & =\pm\lambda^4_2, & \
    \lambda^6_4 &= \lambda^1_1\,(\lambda^4_3 + \lambda^4_2), & \
    \lambda^7_4 &= \lambda^1_1\,(\lambda^5_3  \mp \lambda^4_1),\\
 \lambda^5_2 &=\pm\lambda^4_1 + \lambda^1_1\,(\alpha\,\lambda^1_1 \mp\alpha'), & \
   \lambda^6_5 &=  \lambda^1_1\left[\lambda^4_1 \pm \lambda^5_3 - \lambda^1_1 (\alpha' \mp \alpha\,\lambda^1_1)\right], & \
   \lambda^7_5 &=  \pm\lambda^1_1\,(\lambda^4_3 - \lambda^4_2).
\end{aligned}
\end{equation}
Let us note that this makes the conditions \eqref{condiciones} to be satisfied for every $1\leq i\leq 7$, so
we must turn our attention to $i=8$.
Using~\eqref{valores-lema3-bis} and~\eqref{valores-lema3-red}, we compute
\begin{equation*}
\begin{split}
d(F(v'^{\,8})) - F(dv'^{\,8}) &= \left[ \lambda^8_6 - \lambda^1_1\,(\lambda^6_4 \pm \lambda^1_1\,\lambda^5_1) \right]\,v^{14}
+ \left[ \lambda^8_7 \mp \lambda^1_1\,( \lambda^7_4 + \lambda^1_1\,\lambda^5_2) \right]\,v^{24} \\[2pt]
&
\quad + \left[ \lambda^8_6 - 2\,\beta\,\lambda^8_8
+ \lambda^1_1\,\left( \mp \lambda^7_5 + \lambda^1_1\,(\lambda^4_2 \pm 2\,\beta'\,\lambda^1_1) \right) \right]\,v^{25}
+ \left[ \lambda^8_7  \mp \lambda^5_3\,(\lambda^1_1)^2 \right]\,v^{34} \\[2pt]
&
\quad - \left[ \lambda^8_6 + \beta\,\lambda^8_8 - (\lambda^1_1)^2\,(\lambda^4_3 \pm \beta'\,\lambda^1_1) \right]\,v^{35}
- \left[ \lambda^8_8 \mp (\lambda^1_1)^4 \right]\,v^{45} + \vartheta,
\end{split}
\end{equation*}
where $\vartheta \in \langle v^{14},v^{24},v^{25},v^{34},v^{35},v^{45} \rangle^{\perp} \subset \bigwedge^2\langle v^1,\ldots,v^8\rangle$.
Equalling to zero the coefficients in $v^{14}$, $v^{34}$, and~$v^{45}$, and bearing in mind
\eqref{valores-reducidos},
one has
$$
\lambda^8_6 = (\lambda^1_1)^2\,(\lambda^4_3 + 2\,\lambda^4_2), \qquad
\lambda^8_7 = \pm \lambda^5_3\,(\lambda^1_1)^2, \qquad
\lambda^8_8=\pm (\lambda^1_1)^4.
$$
Moreover, from these equalities together with the vanishing of the coefficient in $v^{35}$, it is easy to see that
$$\lambda^4_2 = \pm \frac{\lambda^1_1}{2}\,(  \beta' - \beta\,\lambda^1_1).$$
We now replace all these values in the coefficients of $v^{24}$ and $v^{25}$. Their annihilation gives us the
desired relation between $(\alpha,\beta)$ and $(\alpha',\beta')$.
\end{proof}

As we observed above, Lemma~\ref{lema3} implies Proposition~\ref{teorema-no-iso-NLA-bis} with $\rho=\lambda^1_1\not= 0$,
because $a,b,a',b'$ are related to
$\alpha,\beta,\alpha',\beta'$ by $\alpha = \frac{4\,a}{\sqrt3}$,
$\beta= \frac{2\,b}{3}$, $\alpha' = \frac{4\,a'}{\sqrt3}$ and $\beta'= \frac{2\,b'}{3}$.

\section{An infinite family of nilmanifolds $N_{a}$ with complex structures}\label{nilvariedades}

\noindent In this section we show how the results of the previous section
allow us to conclude that in dimension eight
there exist infinitely many real homotopy types of nilmanifolds admitting complex structures.

Let us start reviewing some results about minimal models and homotopy theory of nilmanifolds.
Sullivan showed in \cite{Sullivan} that it is possible to associate to any nilpotent CW-complex $X$ a
minimal model, i.e. a commutative differential graded algebra, cdga, $(\bigwedge V_X,d)$ defined over the rational numbers $\mathbb{Q}$
satisfying a certain minimality condition,
which encodes the rational homotopy type of~$X$~\cite{GM-libro}.

Recall that a space $X$ is nilpotent
if its fundamental group $\pi_1(X)$ is a nilpotent group and acts in a nilpotent way on the
higher homotopy groups $\pi_k(X)$ for $k > 1$. These conditions are fulfilled when the space is a nilmanifold $N$,
i.e. $N=\nilm$ is a compact quotient of a connected, simply connected,
nilpotent Lie group $G$ by a lattice $\Gamma$ of maximal rank.
Indeed, for any nilmanifold $N$ one has $\pi_1(N)=\Gamma$, hence nilpotent, and $\pi_k(N)=0$ for every $k \geq 2$.

A commutative differential graded algebra $(\bigwedge V,d)$ defined over
$\mathbb{K}\,(=\mathbb{Q}$ or $\mathbb{R})$ is said to be minimal if the following conditions are satisfied:
\begin{enumerate}
\item[\emph{(1)}] $\bigwedge V$ is the free commutative algebra generated by the graded vector
space $V = \oplus V^k$;
\item[\emph{(2)}] there exists a basis $\{x_j\}_{j \in J}$, for some well-ordered index set $J$, such
that $\deg(x_i) \leq \deg(x_j)$ if $i < j$, and each $dx_j$ is expressed in terms of
preceding $x_i$ $(i < j)$.
\end{enumerate}

Given a differentiable manifold $M$, a $\mathbb{K}$-minimal model of $M$ is a minimal cdga $(\bigwedge V,d)$ over~$\mathbb{K}$
together with a quasi-isomorphism $\phi$ from $(\bigwedge V,d)$ to the $\mathbb{K}$-de Rham complex of $M$,
i.e. a morphism~$\phi$ inducing an isomorphism on cohomology.
Note that the $\mathbb{K}$-de Rham complex of $M$ is the usual de Rham complex of differential forms when $\mathbb{K}=\mathbb{R}$,
whereas for $\mathbb{K}=\mathbb{Q}$ one considers the $\mathbb{Q}$-polynomial forms instead.
Two manifolds $M_1$ and $M_2$ have the same $\mathbb{K}$-homotopy type
if and only if their $\mathbb{K}$-minimal models are isomorphic \cite{DGMS,Sullivan}.
Clearly, if
$M_1$ and $M_2$ have different real homotopy types, then $M_1$ and $M_2$ also have different rational homotopy types.

Let $N=\nilm$ be a nilmanifold of dimension $n$ and denote by $\frg$ the Lie algebra of $G$.
The minimal model of $N=\nilm$ is given by the Chevalley-Eilenberg complex $(\bigwedge\frg^*,d)$
of the nilpotent Lie algebra $\frg$. Indeed, according to Mal'cev \cite{Malcev},
the existence of the lattice $\Gamma$ of maximal rank in $G$ is equivalent to
the nilpotent Lie algebra $\frg$ being rational. The latter condition is in turn equivalent to
the existence of a basis $\{e^1,\ldots,e^n\}$ for the dual $\frg^*$ for which
the structure constants are rational numbers.
Since the Lie algebra~$\frg$
is nilpotent, one can take
a basis $\{e^1,\ldots,e^n\}$ for $\frg^*$ as above satisfying
$$
de^1=de^2=0, \quad de^j=\sum_{i,k<j} a^j_{ik}\, e^i\wedge e^k \ \ \mbox{ for } j=3,\ldots, n,
$$
where $a^j_{ik} \in \mathbb{Q}$.
Therefore, the cdga $(\bigwedge\frg^*,d)$ over $\mathbb{Q}$ is minimal, because it satisfies \emph{(1)} and \emph{(2)}
just taking $J=\{1,\ldots,n\}$
and $V=V^1=\langle x_1,\ldots,x_n \rangle= \sum_{j=1}^n \mathbb{Q}x_j $, where $x_j=e^j$ for $1 \leq j \leq n$.
That is to say, there are $n$ generators $x_1,\ldots,x_n$ of degree $1$, and one has the $\mathbb{Q}$-minimal cdga
\begin{equation}\label{Q-minimal}
\big(\bigwedge\langle x_1,\ldots,x_n\rangle,\,d\,\big).
\end{equation}

Now, the cdga $(\bigwedge\frg^*,d)$ over $\mathbb{R}$ is also minimal, since
it is given by
\begin{equation}\label{R-minimal}
\big(\bigwedge\langle x_1,\ldots,x_n\rangle \otimes \mathbb{R},\,d\,\big).
\end{equation}
Nomizu proved in \cite{Nomizu} that the canonical morphism $\phi$ from the Chevalley-Eilenberg complex $(\bigwedge\frg^*,d)$
to the de Rham complex $(\Omega^*(\nilm),d)$ induces an isomorphism on cohomology. Therefore, the $\mathbb{R}$-minimal model of the nilmanifold
$N=\nilm$ is given by \eqref{R-minimal}.
As it was observed by Hasegawa in~\cite{Has-PAMS},
the cdga \eqref{Q-minimal} is also the $\mathbb{Q}$-minimal model of $N$. Moreover, by using a result
in \cite{DGMS} asserting that a $\mathbb{K}$-minimal model, $\mathbb{K}=\mathbb{Q}$ or $\mathbb{R}$, of a compact birational K\"ahler manifold
is formal, Hasegawa proved that an even-dimensional nilmanifold does not admit any K\"ahler metric unless it is a torus,
by showing that the minimal model \eqref{Q-minimal} is never formal.
(For more results on rational homotopy theory see~\cite{FHT-libro},
and for more applications to symplectic geometry see~\cite{OT-libro}.)

It is proved in \cite{BM2012} that, up to dimension $5$, the number of rational homotopy types of nilmanifolds is finite.
However, in six dimensions the following result holds:

\begin{theorem}\label{teorema-BM}\cite[Theorem 2]{BM2012}
There are $34$ real homotopy types of $6$-dimensional nilmanifolds, and infinitely many rational
homotopy types of $6$-dimensional nilmanifolds.
\end{theorem}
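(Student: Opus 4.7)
The plan is to reduce both claims to classification questions for six-dimensional nilpotent Lie algebras over $\R$ and over $\Q$, and then to argue that the real problem admits a finite list while the rational one does not. The first reduction is formal: by the discussion preceding the theorem, the $\mathbb{K}$-minimal model of a nilmanifold $N = \Gamma \backslash G$ is the Chevalley--Eilenberg complex $(\bigwedge \frg^*, d)$ of its Lie algebra $\frg$, and two nilmanifolds have the same $\mathbb{K}$-homotopy type if and only if their minimal models are isomorphic over $\mathbb{K}$. Since such a minimal model is generated purely in degree one, both parts become counting statements for isomorphism classes of $6$-dimensional rational nilpotent Lie algebras: up to $\R$-isomorphism for part~(a), and up to $\Q$-isomorphism for part~(b).

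For part~(a), I would start from the known classification of $6$-dimensional real nilpotent Lie algebras, which consists of a finite list of ``rigid'' algebras together with a few one-parameter families $\frg_t$, $t \in \R$. The key step is to show that in each such real family the parameter can be absorbed by a real rescaling of the basis (a diagonal matrix $\mathrm{diag}(\rho^{a_1}, \ldots, \rho^{a_6})$ acting on $\frg^*$), so that at most finitely many orbits -- typically those corresponding to $t>0$, $t<0$, and $t=0$ -- remain. Summing over all families and rigid algebras, and discarding the non-rational ones (which cannot arise from a lattice via Mal'cev's criterion), would yield the total count~$34$.

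For part~(b), the strategy is opposite: exhibit at least one family $\frg_t$ in which the rescaling that works over $\R$ requires $\rho$ to be a $k$-th power, so that over $\Q$ the invariant $[t] \in \Q^\times / (\Q^\times)^k$ survives and separates $\Q$-isomorphism classes. Since $\Q^\times/(\Q^\times)^2$ is already infinite (represented by distinct squarefree integers), choosing countably many rational values of $t$ pairwise inequivalent modulo squares produces infinitely many distinct rational homotopy types of nilmanifolds; for each such rational $t$ the existence of a lattice in the simply connected group is guaranteed by Mal'cev.

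The main obstacle I expect is part~(a): verifying that every real parametric family of $6$-dim NLAs genuinely collapses to finitely many $\R$-isomorphism classes, and then tallying them without double-counting. This amounts to a case-by-case analysis in the spirit of the one carried out for $\mathfrak m_{\alpha,\beta}$ in Section~\ref{sect3}, where successive structural conditions forced the change-of-basis matrix to be block-triangular and eventually related the parameters by a single scalar. By contrast, the $\Q$-side of (b) is a clean invariant-theoretic argument once one such scalable family has been singled out; the whole weight of the theorem sits in the real bookkeeping.
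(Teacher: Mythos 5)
This statement is quoted from Bazzoni--Mu\~noz \cite[Theorem 2]{BM2012}; the paper itself offers no proof of it, so there is no internal argument to compare against. Your sketch is, in substance, the strategy of the cited work: reduce $\mathbb{K}$-homotopy types of nilmanifolds to $\mathbb{K}$-isomorphism classes of $6$-dimensional nilpotent Lie algebras via the Chevalley--Eilenberg minimal model, observe that the real classification is finite because the parameters occurring in the classification live in $\R^\times/(\R^\times)^2=\{\pm1\}$, and obtain infinitely many rational classes from families whose parameter is an invariant in $\Q^\times/(\Q^\times)^2$, which is infinite. Two small points to tighten: first, to land on exactly $34$ you must also check that \emph{every} real $6$-dimensional nilpotent Lie algebra admits a rational structure (it does, since the classification can be written with rational structure constants), so nothing is ``discarded'' by Mal'cev's criterion; second, the finiteness over $\R$ is not really new bookkeeping but is already contained in the known classification of $6$-dimensional nilpotent Lie algebras over fields of characteristic $\neq 2$, so the weight of the theorem is more evenly split between the two parts than you suggest.
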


As an obvious consequence, there is a finite number of real homotopy types of $6$-dimensional nilmanifolds
admitting a complex structure. In what follows, we will show that for nilmanifolds with complex structures in eight dimensions
there are infinitely many real homotopy types. To see this, let us take a non-negative rational number $a$
and consider the connected, simply connected,
nilpotent Lie group $G_a$ corresponding to the nilpotent Lie algebra $\frg_{a,1}$ given in Section~\ref{sect3}.
It follows from~\eqref{ec-reales-FII} that the algebra $\frg_{a,1}$ is rational, hence by Mal'cev \cite{Malcev},
there exists a lattice $\Gamma_a$ of maximal rank in $G_a$. We denote by $N_a=\Gamma_a\backslash G_a$ the
corresponding compact quotient. The nilmanifold $N_a$ admits complex structures, for instance,
the strongly non-nilpotent complex structure $J_{a,1}$ defined in~\eqref{J-a,b} for $b=1$.

Now, as we explained above, the $\mathbb{R}$-minimal model of the nilmanifold
$N_a$ is given by the Chevalley-Eilenberg complex $(\bigwedge\frg_{a,1}^*,d)$
of the nilpotent Lie algebra $\frg_{a,1}$.
As a consequence of Theorem~\ref{teorema-main}, one has that two such $\mathbb{R}$-minimal models are isomorphic
if and only if $a=a'$.
Therefore, we have proved the following result:

\begin{theorem}\label{infinite}
There are infinitely many real homotopy types of $8$-dimensional nilmanifolds admitting a complex structure.
\end{theorem}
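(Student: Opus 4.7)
The plan is to package the preceding algebraic work into a topological statement by combining three ingredients: Mal'cev's existence theorem for lattices, the Nomizu--Hasegawa identification of the real minimal model of a nilmanifold with the Chevalley--Eilenberg complex of its Lie algebra, and Theorem~\ref{teorema-main}. First I would restrict the parameter to non-negative rational values $a\in\mathbb{Q}\cap[0,\infty)$. Inspecting equations~\eqref{ec-reales-FII} with $b=1$, every structure constant of $\frg_{a,1}$ then lies in~$\mathbb{Q}$, so by Mal'cev~\cite{Malcev} the connected, simply connected, nilpotent Lie group $G_a$ with Lie algebra $\frg_{a,1}$ contains a lattice $\Gamma_a$ of maximal rank. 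The compact quotient $N_a=\Gamma_a\backslash G_a$ is thus an $8$-dimensional nilmanifold, and the left-invariant integrable almost complex structure $J_{a,1}$ of~\eqref{J-a,b} descends to a complex structure on $N_a$, so each $N_a$ lies in the class we care about.

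Second, I would identify the $\mathbb{R}$-minimal model of $N_a$. Because the ordered dual basis of $\frg_{a,1}^{*}$ satisfies the triangular condition~\emph{(2)}, the Chevalley--Eilenberg cdga $(\bigwedge\frg_{a,1}^{*},d)\otimes\mathbb{R}$ is a minimal cdga over $\mathbb{R}$ of the form~\eqref{R-minimal}. Nomizu's theorem~\cite{Nomizu} provides a quasi-isomorphism from this cdga to the de Rham complex $(\Omega^{*}(N_a),d)$. Consequently $(\bigwedge\frg_{a,1}^{*},d)\otimes\mathbb{R}$ is the $\mathbb{R}$-minimal model of $N_a$.

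Third, I would translate the problem of distinguishing real homotopy types into the algebraic statement already proven. By Sullivan's theory~\cite{Sullivan, DGMS}, $N_a$ and $N_{a'}$ have the same real homotopy type if and only if their $\mathbb{R}$-minimal models are isomorphic as cdgas over~$\mathbb{R}$. Since $\frg_{a,1}$ is generated in degree one, an isomorphism between the two Chevalley--Eilenberg complexes is determined by its restriction to the degree-one part $\frg_{a',1}^{*}\to\frg_{a,1}^{*}$; commutativity with $d$ is precisely the condition that the dual linear map is a Lie algebra homomorphism. Hence a cdga isomorphism yields a Lie algebra isomorphism $\frg_{a,1}\cong\frg_{a',1}$, which by Theorem~\ref{teorema-main} forces $a=a'$. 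Letting $a$ range over $\mathbb{Q}\cap[0,\infty)$ therefore produces infinitely many pairwise non-isomorphic real minimal models, and hence infinitely many real homotopy types of $8$-dimensional nilmanifolds admitting a complex structure.

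The only step that requires care, rather than being a direct invocation of cited results, is the equivalence between cdga isomorphisms of the Chevalley--Eilenberg complexes and Lie algebra isomorphisms of the underlying $\frg_{a,1}$; I would make it explicit by noting that any cdga map between free graded-commutative algebras generated in degree one is determined by the linear map on generators, so compatibility with the differentials translates into the dualized Jacobi-preserving condition. Once this small observation is recorded, the infinitude statement follows immediately from Theorem~\ref{teorema-main}.
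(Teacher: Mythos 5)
Your proposal is correct and follows essentially the same route as the paper: restrict to rational $a\geq 0$, invoke Mal'cev for the lattice, identify the $\mathbb{R}$-minimal model with the Chevalley--Eilenberg complex via Nomizu, and conclude from Theorem~\ref{teorema-main}. The only difference is that you spell out the (standard) equivalence between cdga isomorphisms of minimal models generated in degree one and Lie algebra isomorphisms, which the paper leaves implicit.
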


\medskip

A direct calculation using Nomizu's theorem \cite{Nomizu} allows to explicitly compute the de Rham cohomology groups of any nilmanifold $N_{a}$.
In our case, we have:
\begin{equation*}
\begin{split}
H_{\text{dR}}^1(N_{a})=\langle\, &[e^1],\,[e^2],\,[e^3] \,\rangle, \\[2pt]
H_{\text{dR}}^2(N_{a})=\langle\, &[e^{12}],\,[e^{14}],\,[e^{25}],\,[e^{34}] \,\rangle, \\[2pt]
H_{\text{dR}}^3(N_{a})= \langle\, &[e^{147}],\,[e^{257}\!-2\,e^{346}],\,
  [3\,e^{146}\!-e^{256}\!+2\,e^{356}],\,[e^{156}\!+e^{246}\!-2\,e^{346}], \\
&
[3\,e^{157}\!+3\,e^{247}\!+2\,e^{256}\!+2\,e^{356}],\,[2\, e^{126}\!+3\,e^{128}\!+4\,e^{237}\!-2\,e^{256}],
\,[2 a\,e^{237}\!-e^{346}\!-e^{357}] \,\rangle.
\end{split}
\end{equation*}

By Poincar\'e duality, the following Betti numbers for $N_a$ are obtained:
$$
b_0(N_a)=b_8(N_a)=1, \quad b_1(N_a)=b_7(N_a)=3, \quad b_2(N_a)=b_6(N_a)=4, \quad b_3(N_a)=b_5(N_a)=7.
$$
The Betti number $b_4(N_a)$ can be computed by taking into account that the Euler characteristic $\chi$ of
a nilmanifold always vanishes. Hence,
$$
0=\chi(N_a) = \sum_{k=0}^8 (-1)^k b_k(N_a) = b_4(N_a) + 2\left( b_0(N_a)-b_1(N_a)+b_2(N_a)-b_3(N_a) \right),
$$
which implies $b_4(N_a)=10$.

\smallskip

The second de Rham cohomology group provides the following additional geometric information on the nilmanifolds $N_a$.

\begin{proposition}\label{Na-no-simplectica}
For each non-negative rational number $a$,
the nilmanifold $N_a$ does not admit any symplectic structure.
\end{proposition}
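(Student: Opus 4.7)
The plan is to use the classical cohomological obstruction to the existence of a symplectic structure. If $\omega$ were a symplectic form on the $8$-dimensional manifold $N_a$, then $\omega^4$ would be a volume form and hence represent a non-zero class in $H^8_{\text{dR}}(N_a)$. I would derive a contradiction by showing that the fourth power of \emph{every} class in $H^2_{\text{dR}}(N_a)$ already vanishes at the Chevalley--Eilenberg level.

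First I would invoke Nomizu's theorem, already used above to present the cohomology of $N_a$: the inclusion $(\bigwedge \frg_{a,1}^{\,*},d) \hookrightarrow (\Omega^{*}(N_a),d)$ is a quasi-isomorphism, so the hypothetical class $[\omega]$ admits a left-invariant representative. Reading off the basis of $H^2_{\text{dR}}(N_a)$ displayed above, any such representative has the form
\[
\omega_0 = \alpha\,e^{12} + \beta\,e^{14} + \gamma\,e^{25} + \delta\,e^{34}, \qquad \alpha,\beta,\gamma,\delta \in \R.
\]

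I would then compute $\omega_0^4$ in $\bigwedge^8 \frg_{a,1}^{\,*}$. Since each summand is a $2$-form, it squares to zero under the wedge product, so the multinomial expansion of $\omega_0^4$ collapses to the single monomial $4!\,\alpha\beta\gamma\delta\, e^{12}\wedge e^{14}\wedge e^{25}\wedge e^{34}$. This $8$-form contains the generator $e^1$ twice (once in $e^{12}$ and once in $e^{14}$) and therefore vanishes identically. Consequently $[\omega]^4 = [\omega_0^4] = 0$, contradicting $[\omega^4] \neq 0$. There is essentially no obstacle to overcome: the proof is forced by the restricted shape of $H^2_{\text{dR}}(N_a)$, whose four generators share repeated indices so that every top-degree monomial one can form from them is automatically zero. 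The only conceptual ingredient, Nomizu's theorem, was already used to present $H^{*}_{\text{dR}}(N_a)$, so no new input is required.
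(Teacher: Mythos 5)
Your proof is correct and follows essentially the same route as the paper: both reduce to the invariant representatives $e^{12}, e^{14}, e^{25}, e^{34}$ of $H^2_{\text{dR}}(N_a)$ via Nomizu's theorem and observe that repeated indices force the top power of any class to vanish (the paper notes that already the cube vanishes, which is a marginally stronger observation but the same idea).
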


\begin{proof}
It is enough to see that any de Rham cohomology class $\mathfrak{a}$ in $H_{\text{dR}}^2(N_{a})$ is degenerate.
Indeed, since $\mathfrak{a}=\lambda_1[e^{12}]+\lambda_2 [e^{14}]+\lambda_3[e^{25}]+\lambda_4[e^{34}]$
it is clear that the cup product $\mathfrak{a}\cup\mathfrak{a}\cup\mathfrak{a}$ vanishes, so $\mathfrak{a}^4\equiv 0$
and $\mathfrak{a}$ is degenerate.
\end{proof}

In contrast to this result, in the following section we will show that
these nilmanifolds have many special Hermitian metrics.

\section{Special metrics on the nilmanifolds $N_{a}$}\label{Hmetrics}

\noindent
We here study the existence of special Hermitian metrics on the nilmanifolds $N_{a}$ endowed with
the strongly non-nilpotent complex structures given in the previous sections.
More concretely, we find many generalized Gauduchon metrics as well as balanced Hermitian metrics.

Let us start by recalling the definition and the main properties of the $k$-th Gauduchon metrics studied in \cite{FWW}.

\begin{definition} \cite{FWW}
{\rm
Let  $X=(M, J)$ be a compact complex manifold  of complex dimension $n$, and let~$k$ be an integer such that
$1 \leq k \leq n -1$.
A $J$-Hermitian metric $g$ on $X$ is called \emph{$k$-th Gauduchon} if its fundamental $2$-form $F$ satisfies the condition
$\partial \db F^k  \wedge F^{n -k-1} =0$.
}
\end{definition}

Clearly, any SKT metric (a $J$-Hermitian metric $g$ with fundamental form $F$ satisfying $\partial \db F=0$) is a $1$-st Gauduchon metric,
and any astheno-K\"ahler metric (i.e. a $J$-Hermitian metric $g$ such that its fundamental form $F$ satisfies the condition $\partial \db F^{n-2}=0$) is an $(n-2)$-th Gauduchon metric.
SKT, resp. astheno-K\"ahler, metrics were first introduced by Bismut in~\cite{Bismut}, resp. by Jost and Yau in \cite{JY},
and they have been further studied by many authors.
Notice that for $k = n-1$ one gets the standard metrics found in \cite{Gau}.

In \cite{FWW} an extension of Gauduchon's result for standard metrics is proved.
More concretely, it is shown in \cite[Corollary 4]{FWW} that if $(M,J,F)$ is an $n$-dimensional compact Hermitian manifold,
then for any integer $1 \leq k \leq n - 1$,
there exists a unique constant $\gamma_k (F)$ and a (unique up to a constant) function $v \in {\mathcal C}^{\infty} (M)$ such that
$$
\frac{i}{2} \partial \db (e^v F^k) \wedge F^{n - k -1} = \gamma_k (F) e^v F^n.
$$
If $(M,J,F)$ is K\"ahler, then $\gamma_k(F) =0$ and $v$ is a constant function for any $1 \leq k \leq n - 1$.
Moreover, for any Hermitian metric on a compact complex manifold one has $\gamma_{n-1}(F) =0$.

The constant $\gamma_k(F)$ depends smoothly on $F$ (see \cite[Proposition 9]{FWW}).
Furthermore, it is proved in \cite[Proposition~8]{FWW}
that $\gamma_k (F) = 0$ if and only if there exists a $k$-th Gauduchon metric in the conformal class of~$F$.

Some compact complex manifolds with generalized Gauduchon metrics
are constructed in \cite{FU,FWW,IP,LUV-DGA} by different methods.
In the following result we find many generalized Gauduchon nilmanifolds
of complex dimension 4.

As in Section~\ref{nilvariedades}, for each non-negative rational number $a$, we consider the
nilmanifold $N_{a}$ endowed with the complex structure given by~\eqref{J-a,b} for $b=1$, i.e. with the complex structure $J_{a,1}$.
Hence, $(N_a,J_{a,1})$ is a compact complex manifold of complex dimension 4.
We will denote this complex nilmanifold by $X_a$, i.e. $X_a=(N_a,J_{a,1})$.

\begin{theorem}\label{infinite-1-Gauduchon}
For each non-negative rational number $a$, the complex nilmanifold $X_a$ has a $k$-th Gauduchon metric, for every $1 \leq k \leq 3$.
Therefore, there are infinitely many real homotopy types of $8$-dimensional $k$-th Gauduchon nilmanifolds, for every $1 \leq k \leq 3$.
\end{theorem}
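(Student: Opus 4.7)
The plan is to work entirely at the level of invariant forms on the nilmanifold $N_a$, exploiting the fact that any left-invariant $J_{a,1}$-Hermitian metric on the Lie group $G_a$ descends to a Hermitian metric on $X_a=(N_a,J_{a,1})$. Starting from the complex basis $\{\omega^1,\omega^2,\omega^3,\omega^4\}$ of Proposition~\ref{complexification_(1,3,5,8)+(1,3,5,6,8)} satisfying~\eqref{eleccion}, I would consider a multi-parameter family of invariant Hermitian forms built from the diagonal skeleton
$$
2\,F \,=\, i\,\bigl(t_1^2\,\omega^{1\bar 1} + t_2^2\,\omega^{2\bar 2} + t_3^2\,\omega^{3\bar 3} + t_4^2\,\omega^{4\bar 4}\bigr),\qquad t_j>0,
$$
allowing, if needed, off-diagonal perturbations of the form $i\,u_{jk}(\omega^{j\bar k}-\omega^{k\bar j})+u'_{jk}(\omega^{j\bar k}+\omega^{k\bar j})$ with $j<k$. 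The key observation is that for such an invariant $F$ the top-degree form $\partial\bar\partial F^k\wedge F^{n-k-1}$ is again invariant, hence a constant multiple of the invariant volume form $F^4$. Consequently, the function $v$ appearing in~\cite[Corollary~4]{FWW} may be taken constant, and the constant $\gamma_k(F)$ of that corollary becomes an explicit polynomial in the $t_j$ (and the perturbation parameters, with a mild dependence on $a$) that can be read off from the structure equations~\eqref{eleccion}.

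The plan then splits by the value of $k$. For $k=3=n-1$ the identity $\gamma_3(F)=0$ holds for every Hermitian metric on a compact complex manifold by Gauduchon's classical theorem, so any invariant $F$ already provides a $3$-rd Gauduchon metric on $X_a$. For $k\in\{1,2\}$ my strategy is to exhibit a continuous one-parameter subfamily $F_s$, $s\in[s_0,s_1]$, of invariant metrics for which the real number $\gamma_k(F_s)$ takes values of opposite signs at the endpoints. Since $\gamma_k$ depends smoothly on the metric by~\cite[Proposition~9]{FWW}, the intermediate value theorem then yields an $s_*\in(s_0,s_1)$ with $\gamma_k(F_{s_*})=0$, and by~\cite[Proposition~8]{FWW} the conformal class of $F_{s_*}$ contains a genuine $k$-th Gauduchon metric, which is exactly what is required.

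Concretely I would compute $\partial F$, $\bar\partial F$ and the products $F^k$ from~\eqref{eleccion}, and then evaluate the ratio $\partial\bar\partial F^k\wedge F^{n-k-1}/F^4$ as a rational function of the parameters. The dependence on $a$ enters only through the coefficient $a\,\omega^{1\bar 1}$ in $d\omega^3$, so once the computation is done it applies uniformly in $a\in\mathbb{Q}_{\ge 0}$. The second sentence of the theorem is then immediate: by Theorem~\ref{infinite} the family $\{N_a\}_{a\in\mathbb{Q}_{\ge 0}}$ already realises infinitely many real homotopy types of $8$-dimensional nilmanifolds, and each one has just been equipped with a $k$-th Gauduchon metric for every $1\le k\le 3$.

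The main obstacle I anticipate is purely computational, namely ensuring that the chosen family of invariant metrics is rich enough for $\gamma_k$ to change sign. A purely diagonal ansatz may force $\gamma_k$ to have a definite sign, particularly in the $k=2$ (astheno-K\"ahler-type) case where $\partial\bar\partial F^2\wedge F$ mixes many products of the structure constants appearing in~\eqref{eleccion}. If this occurs, I would enlarge the family by turning on off-diagonal entries involving $\omega^{1\bar 2}$, $\omega^{1\bar 3}$ or $\omega^{3\bar 4}$, guided by the shape of $d\omega^3$ and $d\omega^4$, until a sign change for $\gamma_k$ appears; the subsequent continuity argument then closes the proof.
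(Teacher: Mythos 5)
Your overall framework is sound and is in fact the standard alternative to what the paper does: for an \emph{invariant} Hermitian form $F$ on a nilmanifold one has $\frac{i}{2}\partial\db F^k\wedge F^{n-k-1}=c\,F^n$ with $c$ constant, so $v$ may be taken constant, $\gamma_k(F)=c$, and a sign change of $c$ along a connected family of invariant metrics yields, via \cite[Propositions 8 and 9]{FWW}, a $k$-th Gauduchon metric; this is precisely the method of \cite{FU,LUV-DGA}. The paper takes a different and shorter route: it writes down one explicit invariant form, $F=\frac{101}{4}i\,\omega^{1\bar1}+i\,\omega^{2\bar2}+i\,\omega^{3\bar3}+\frac12 i\,\omega^{4\bar4}-5\,\omega^{1\bar3}+5\,\omega^{3\bar1}$, verifies $\partial\db F\wedge F^2=0$ directly from \eqref{eleccion-bis}, and then invokes \cite[Proposition 2.2]{LU}, which says that an invariant Hermitian metric on a complex nilmanifold of complex dimension $n\ge 4$ that is $k$-th Gauduchon for one $k\in\{1,\dots,n-2\}$ is $k$-th Gauduchon for every $k$. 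You should fold that result into your plan in any case: it reduces the whole problem to $k=1$ and spares you a separate sign-change analysis for the astheno-K\"ahler-type case $k=2$, which would otherwise be the most delicate part of your computation.

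As written, though, the proposal has a genuine gap: the decisive step --- exhibiting, uniformly in $a\in\Q_{\ge 0}$, invariant metrics on $X_a$ with $\gamma_1$ of opposite signs (or an explicit one with $\gamma_1=0$) --- is never carried out, and you yourself flag that the diagonal ansatz may fail. That worry is well founded: the metric the paper uses needs both a large coefficient on $\omega^{1\bar1}$ and a nonzero $\omega^{1\bar3}$-component, which strongly suggests that $\gamma_1$ keeps a fixed sign on purely diagonal metrics, so your fallback of turning on off-diagonal terms is not optional but essential, and nothing in the proposal guarantees that it produces a sign change rather than, say, a one-signed $\gamma_1$ that only degenerates in a limit. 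Until that computation is actually performed, the existence of a $1$-st Gauduchon metric on $X_a$ --- which is the entire nontrivial content of the theorem, the second sentence being immediate from Theorem~\ref{infinite} exactly as you say --- remains unproved.
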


\begin{proof}
By Theorem~\ref{infinite}, we know that the nilmanifolds $N_a$ and $N_{a'}$ have different real homotopy types for $a \not= a'$.
The complex structure equations for $X_a=(N_a,J_{a,1})$ are given by~\eqref{eleccion} with $b=1$, i.e.
\begin{equation}\label{eleccion-bis}
\left\{\begin{array}{rcl}
d\omega^1 &=& 0,\\[2pt]
d\omega^2 &=& -\omega^{14} + \omega^{1\bar 4},\\[2pt]
d\omega^3 &=& a\,\omega^{1\bar 1} - i\,(\omega^{12} - \omega^{1\bar 2} - \omega^{2\bar1})
  + i\,(\omega^{24}-\omega^{2\bar 4}),\\[2pt]
d\omega^4 &=& i\,(\omega^{1\bar 2} + \omega^{2\bar 1}) + \omega^{1\bar 3}
  +i\,\omega^{2\bar 2}-\omega^{3\bar 1}.
  \end{array}\right.
\end{equation}

We define the $(1,1)$-form $F$ on $X_{a}$ given by
$$
F =
\frac{101}{4} \,i\, \omega^{1\bar{1}} + i\,\omega^{2\bar{2}} + i\,\omega^{3\bar{3}} + \frac{1}{2} i\, \omega^{4\bar{4}}
- 5\,\omega^{1\bar{3}} + 5\, \omega^{3\bar{1}}.
$$
It is easy to check that $F$ defines a positive-definite metric on $X_a$, hence a Hermitian metric on~$X_a$.
Furthermore, a direct calculation using the structure equations~\eqref{eleccion-bis} shows that
\begin{equation*}
\begin{split}
\partial\db F =& -4\,i\,\omega^{12\bar 1\bar 2} - (1-6\,i)\,\omega^{12\bar 1 \bar 3} + a\,\omega^{12\bar 1\bar 4}
   - (1-i)\,\omega^{12\bar2\bar3} + (1+6\,i)\,\omega^{13\bar 1\bar2}
   - i\,\omega^{13\bar 1\bar3} \\
   &- \omega^{13\bar 2\bar3} - a\,\omega^{14\bar 1\bar2} - 12\,i\,\omega^{14\bar 1\bar4} + \omega^{14\bar 3\bar4}
   + (1+i)\,\omega^{23\bar 1\bar2} + \omega^{23\bar 1\bar 3} - 2\,i\,\omega^{24\bar 2\bar 4}
   - \omega^{34\bar 1\bar 4}.
\end{split}
\end{equation*}
Therefore, one has $\partial\db F\wedge F^2=0$
for any $a$, i.e. $F$ is a $1$-st Gauduchon metric on the
complex nilmanifold $X_a$ for any $a$.

By \cite[Proposition 2.2]{LU}, if an invariant Hermitian
metric $F$ on a complex nilmanifold $X$ of complex dimension $n\geq 4$
is $k$-th Gauduchon for some $1 \leq k \leq n-2$, then it is $k$-th Gauduchon for any other $k$.
Since any invariant Hermitian metric is $(n-1)$-Gauduchon, we conclude that
the $1$-st Gauduchon metric on $X_a$ constructed above
are also $k$-th Gauduchon for $k=2,3$.
\end{proof}

It can be seen that the compact complex manifolds $X_{a}$ are not SKT (see \cite{EFV} for general results).
Moreover, it can be proved that the manifolds $X_{a}$ do not admit any {\it invariant} astheno-K\"ahler metric.

\medskip

A particularly interesting class of standard metrics is the one given by \emph{balanced} Hermitian metrics,
defined by the condition $dF^{n-1}=0$. Important aspects of these metrics were first investigated by Michelshon in~\cite{Mi},
and many authors have constructed balanced manifolds and studied their properties since then.
In the following result we find many balanced nilmanifolds of complex dimension~4.

\begin{theorem}\label{infinite-balanced}
For each non-negative rational number $a$, the complex nilmanifold $X_a$ has a balanced Hermitian metric.
Therefore, there are infinitely many real homotopy types of $8$-dimensional balanced nilmanifolds.
\end{theorem}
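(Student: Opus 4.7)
Since Theorem~\ref{infinite} already furnishes infinitely many real homotopy types in the family $\{N_a : a\in\mathbb{Q}_{\geq 0}\}$, the theorem reduces to exhibiting, for each such $a$, an explicit balanced Hermitian metric on $X_a=(N_a,J_{a,1})$. I would work at the Lie algebra level: by Nomizu's theorem and the invariance of $J_{a,1}$, any invariant Hermitian $(1,1)$-form on $(\frg_{a,1},J_{a,1})$ descends to $X_a$, and being balanced is the Chevalley-Eilenberg condition $dF^3=0$ on the complex $4$-fold, or equivalently, since $dF^3=3\,F^2\wedge dF$ and $F$ is real, the single condition $F^2\wedge\partial F=0$ in the space of invariant $(3,2)$-forms.

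The plan is to imitate the ansatz used in the proof of Theorem~\ref{infinite-1-Gauduchon}: propose a real invariant $(1,1)$-form
\[
F=i\sum_{j=1}^{4} r_j\,\omega^{j\bar j}+\sum_{j<k}\bigl(s_{jk}\,\omega^{j\bar k}-\bar s_{jk}\,\omega^{k\bar j}\bigr),
\]
with $r_j\in\mathbb{R}_{>0}$ and $s_{jk}\in\mathbb{C}$ to be determined, and then use the structure equations \eqref{eleccion-bis} to compute $\partial F$ as a concrete $(2,1)$-form, and subsequently $F^2\wedge\partial F$ as a $(3,2)$-form. Setting the coefficients of this $(3,2)$-form to zero yields a finite algebraic system in the parameters $r_j,s_{jk}$ (with $a$ appearing linearly as a structure constant in the equation for $d\omega^3$). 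I would first try the diagonal ansatz $s_{jk}\equiv 0$; if this system admits no positive solution, I would reintroduce just enough off-diagonal terms, in analogy with the $\omega^{1\bar 3}, \omega^{3\bar 1}$ correction used for the generalized Gauduchon metric, until the balanced equations are solvable. A natural guess, given the symmetry of the equations, is that the coefficient of $\omega^{1\bar 3}$ must be adjusted to kill the $a\,\omega^{1\bar 1}$ contribution produced by $d\omega^3$.

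The last step is to verify positive-definiteness of the Hermitian matrix $(h_{j\bar k})$ determined by $F$; this is a routine check of leading principal minors, and one has the freedom to enlarge the diagonal entries $r_j$ to enforce it once the relations among the $s_{jk}$ have been fixed by the balanced equations. Assembling everything, the invariant form $F$ descends to a balanced Hermitian metric on $X_a$, so that each $X_a$ is balanced (and hence strongly Gauduchon by \cite{Pop0,Pop1}); combined with Theorem~\ref{infinite}, this produces infinitely many real homotopy types of $8$-dimensional balanced nilmanifolds.

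The main obstacle I anticipate is the $a$-dependence: the balanced system need not be solvable uniformly in $a$, and a careful choice of $a$-dependent coefficients may be required, together with a simultaneous verification that the resulting Hermitian matrix stays positive-definite as $a$ varies over $[0,\infty)\cap\mathbb{Q}$. If the coefficients of the balanced equations turn out to be polynomial in $a$, I would isolate the $a$-free solution first and then correct it by an $a$-dependent perturbation of the diagonal terms, which is the strategy I find most likely to succeed given the structure of \eqref{eleccion-bis}.
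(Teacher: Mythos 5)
Your overall strategy coincides with the paper's: reduce the balanced condition on the invariant form to $\partial F\wedge F^2=0$ at the level of the Chevalley--Eilenberg complex, take a Hermitian ansatz with off-diagonal terms, and solve the resulting algebraic system in the coefficients while keeping positive-definiteness. You even correctly anticipate the two features that matter in practice: the need for an $\omega^{1\bar 3}$, $\omega^{3\bar 1}$ correction and for $a$-dependent coefficients. However, as written the proposal has a genuine gap: the entire content of the theorem is the \emph{existence} of a solution to that algebraic system, and you never establish it. Phrases such as ``if this system admits no positive solution, I would reintroduce just enough off-diagonal terms \dots until the balanced equations are solvable'' are a search plan, not a proof; many complex nilmanifolds admit no balanced metric at all, so solvability cannot be taken for granted and must be exhibited. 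Until a concrete $F$ is produced and $\partial F\wedge F^2=0$ is verified, the claim is unproved.

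For comparison, the paper closes this gap by writing down explicit metrics, with a case split at $a=0$: for $a=0$ it takes $2F_0=i\,(2\,\omega^{1\bar 1}+\omega^{2\bar 2}+4\,\omega^{3\bar 3}+\omega^{4\bar 4})+2\,\omega^{1\bar 3}-2\,\omega^{3\bar 1}$, and for $a>0$ it takes $2F_a=i\,(a(a+1)\,\omega^{1\bar 1}+\omega^{2\bar 2}+\omega^{3\bar 3}+2\,\omega^{4\bar 4})+a\,\omega^{1\bar 3}-a\,\omega^{3\bar 1}+\omega^{2\bar 4}-\omega^{4\bar 2}$, then checks $\partial F\wedge F^2=0$ directly from the structure equations. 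Note that the working ansatz for $a>0$ requires off-diagonal terms in \emph{two} slots ($\omega^{1\bar 3}$ and $\omega^{2\bar 4}$), and that the $\omega^{1\bar 1}$ coefficient must grow quadratically in $a$ to keep the Hermitian matrix positive-definite; your proposed purely diagonal first attempt, and your single $\omega^{1\bar 3}$ correction, would not suffice. Supplying such explicit coefficients and the verification of $\partial F\wedge F^2=0$ is exactly what is missing from your argument.
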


\begin{proof}
We will define a balanced Hermitian metric on $X_a=(N_a,J_{a,1})$ depending on the values of the rational number $a$.

For $a=0$, we consider the $(1,1)$-form $F$ on $X_{0}$ given by
$$
2\,F_0 = i\, (2\,\omega^{1\bar{1}} + \omega^{2\bar{2}} + 4\,\omega^{3\bar{3}} + \omega^{4\bar{4}}) + 2\,\omega^{1\bar{3}} - 2\,\omega^{3\bar{1}}.
$$
Hence, $F_0$ defines a positive-definite metric on $X_0$.
Since $F_0$ is real, the closedness of $F_0^3$ is equivalent to the condition
$$
\partial F_0 \wedge F_0^2 = 0.
$$
Using the structure equations~\eqref{eleccion-bis} for $a=0$, we get
\begin{equation*}
\begin{array}{rl}
2\, \partial F_0 =\!\!&\!\! 4\,\omega^{12\bar{3}} - 4\,\omega^{13\bar{2}} +(i-1)\omega^{14\bar{2}} + i\,\omega^{14\bar{3}}
- 4\,\omega^{23\bar{1}}  \\[4pt]
\!\!&\!\! -(i+1)\omega^{24\bar{1}} -\omega^{24\bar{2}} - 4\,\omega^{24\bar{3}} -i\,\omega^{34\bar{1}} - 4\,\omega^{34\bar{2}}.
\end{array}
\end{equation*}
One can check that $\partial F_0 \wedge F_0^2 = 0$, i.e. $d F_0^3=0$,
so $F_0$ is a balanced Hermitian metric on the
complex nilmanifold $X_0$.

For any rational number $a> 0$, we define the $(1,1)$-form $F_a$ on $X_{a}$ given by
$$
2\,F_a = i \left( a(a+1) \omega^{1\bar{1}} + \omega^{2\bar{2}} + \omega^{3\bar{3}} + 2\,\omega^{4\bar{4}} \right)
+ a\,\omega^{1\bar{3}} - a\,\omega^{3\bar{1}}
+ \omega^{2\bar{4}} - \omega^{4\bar{2}}.
$$
It is clear that $F_a$ defines a Hermitian metric on $X_a$ $(a> 0)$.
A direct calculation using the structure equations~\eqref{eleccion-bis} shows that
\begin{equation*}
\begin{array}{rl}
2\, \partial F_a =\!\!&\!\! i\,\omega^{12\bar{2}} +2\,\omega^{12\bar{3}} - ai\,\omega^{13\bar{1}} - \omega^{13\bar{2}}
-\left(2-i(a-1)\right) \omega^{14\bar{2}} + 2i\,\omega^{14\bar{3}} \\[4pt]
\!\!&\!\! - \omega^{14\bar{4}}  -\left(2+i(a-1)\right) \omega^{24\bar{1}} -2\,\omega^{24\bar{2}} - \omega^{24\bar{3}} -2i\,\omega^{34\bar{1}} - \omega^{34\bar{2}}.
\end{array}
\end{equation*}
Moreover, one can check that
$\partial F_a \wedge F_a^2 = 0$, and consequently
$d F_a^3=0$. Therefore, $F_a$ is a balanced metric on the
complex nilmanifold $X_a$ for any positive rational number $a$.
\end{proof}

In~\cite{Pop0,Pop1} Popovici has introduced and investigated the class of \emph{strongly Gauduchon} metrics $F$,
which are defined by the condition
$\partial F^{n-1}=\db\gamma$, for some complex form $\gamma$ of bidegree $(n,n-2)$ on a compact complex manifold $X$
of complex dimension $n$.
It is clear by definition that any balanced metric is strongly Gauduchon, and any strongly Gauduchon metric is standard.

Hence, as an immediate consequence of Theorem~\ref{infinite-balanced} we get:

\begin{corollary}\label{infinite-sG}
There are infinitely many real homotopy types of $8$-dimensional strongly Gauduchon nilmanifolds.
\end{corollary}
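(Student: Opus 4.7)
My plan is to obtain the corollary as an immediate consequence of Theorem~\ref{infinite-balanced} together with the definitional implication balanced $\Rightarrow$ strongly Gauduchon. Concretely, on a compact complex $n$-fold $X$, if $F$ is a balanced Hermitian metric, then $d F^{n-1}=0$, so in particular $\partial F^{n-1}=0$, and taking $\gamma=0$ as the $(n,n-2)$-form in Popovici's defining equation $\partial F^{n-1}=\bar{\partial}\gamma$ shows that $F$ is strongly Gauduchon. Hence every balanced manifold is strongly Gauduchon.

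With this observation in hand, I would apply Theorem~\ref{infinite-balanced} to produce, for every nonnegative rational $a$, a balanced Hermitian metric $F_a$ on the complex nilmanifold $X_a=(N_a,J_{a,1})$. The preceding paragraph upgrades each such $F_a$ to a strongly Gauduchon metric on $X_a$, so each $N_a$ admits at least one strongly Gauduchon Hermitian structure. Finally, by Theorem~\ref{infinite} the underlying real nilmanifolds $N_a$, $a\in\mathbb{Q}_{\geq 0}$, realize infinitely many distinct real homotopy types, which yields the claimed conclusion. There is no real obstacle in this argument: the whole content sits in Theorem~\ref{infinite-balanced} and in the trivial observation that closedness of $F^{n-1}$ already exhibits $\gamma=0$ as a witness for the strongly Gauduchon condition.
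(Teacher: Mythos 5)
Your argument is correct and is exactly the paper's: the corollary follows immediately from Theorem~\ref{infinite-balanced} once one observes that a balanced metric satisfies $\partial F^{n-1}=0=\bar\partial\gamma$ with $\gamma=0$, hence is strongly Gauduchon. Nothing further is needed.
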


It is well known that the existence of a K\"ahler metric on an even dimensional nilmanifold reduces the nilmanifold
to be a torus, so the underlying Lie algebra is abelian and the minimal model is formal~\cite{Has-PAMS}.
More generally, it seems natural to ask if the existence of other geometric structures on nilmanifolds
imposes some restriction on their real homotopy type. For instance, as mentioned in the introduction,
it is an interesting question if the number of real homotopy types of $8$-dimensional nilmanifolds
admitting an invariant hypercomplex structure is finite or not.
In addition to this problem, the answers to the following questions are, to our knowledge, unknown:

\medskip

\noindent\textbf{Question A.} Are there infinitely many real homotopy types of $8$-dimensional SKT nilmanifolds?

\medskip

\noindent\textbf{Question B.} Are there infinitely many real homotopy types of $8$-dimensional astheno-K\"ahler

\hskip1.8cm nilmanifolds?




\smallskip

\end{document}